\crefname{secinapp}{Section}{Sections}
\Crefname{secinapp}{Section}{Sections}
\newlength{\dhatheight}
\newtheorem{theorem}{Theorem}[section]
\newtheorem{corollary}[theorem]{Corollary}
\newtheorem{conjecture}[theorem]{Conjecture}
\newtheorem{definition}[theorem]{Definition}
\newtheorem{remark}[theorem]{Remark}
\newtheorem{lemma}[theorem]{Lemma}
\newtheorem{example}[theorem]{Example}
\theoremstyle{definition}
\newtheorem{algorithm}[theorem]{Algorithm}
\theoremstyle{remark}
\newmdtheoremenv[
hidealllines=true,
leftline=true,
innertopmargin=0pt,
innerbottommargin=0pt,
linewidth=4pt,
linecolor=gray!40,
innerrightmargin=0pt,
innertopmargin=-6pt,
]{examplei}{Example} 
\begin{document}

\newcommand{\js}[1]{#1_s}

% Begin document, frontmatter is first

\begin{frontmatter}
\title{Resistance values under transformations in regular triangular grids
}

\author[byu1]{Emily. J. Evans} 
\author[tu]{Russell Jay Hendel} 

\address[byu1]{Department of Mathematics,
  Brigham Young University,
  Provo, Utah 84602, USA}
\address[tu]{Dept of Mathematics, Towson University, Towson Maryland 21252, USA}
 
\begin{abstract}
In ~\cite{evans2021algorithmic,Hendel} the authors investigated resistance distance in triangular grid graphs and observed several types of asymptotic behavior. This paper extends their work by studying the initial, non-asymptotic, behavior found when equivalent circuit transformations are performed, reducing the rows in the triangular grid graph one row at a time.  The main conjecture characterizes, after reducing an arbitrary number of times an initial triangular grid all of whose edge resistances are identically one, when   edge resistance values are less than, equal to, or greater than one. A special case of the conjecture is proven. The main theorem identifies patterns of repeating edge resistances arising in diagonals of a triangular grid reduced $s$ times provided the original grid has at least $4s$ rows of triangles.   This paper also improves upon the notation, concepts, and proof techniques introduced by the authors previously. \end{abstract}

\begin{keyword}
effective resistance,  triangular grids, circuit simplification
\end{keyword}

\end{frontmatter}

\section{Introduction}

% Note \#3: Someone has to write a cover letter. I can volunteer for that (place it in overleaf so you can look at it.
% \color{red} Great! I'm happy if you do it.
\color{black}

 In  \cite{Barrett9}, a paper devoted to the family of straight linear 2--trees,  the triangular grid graph was introduced, which heuristically consists of $n$ rows
of upright-oriented equilateral triangles with edge weight and resistance equal to one. The authors conjectured an explicit asymptotic formula.   As a result of empirical numerical calculations using the combinatorial Laplacian, rather than circuit analysis, the authors conjectured an explicit asymptotic formula connected with the triangular grid.   Moreover~\cite{Barrett9, evans2021algorithmic} introduced  an algorithm for circuit reduction of the triangular grid graph, consisting
of reducing the triangular graph one row at a time, but the authors made no attempt to determine resistance value patterns in the intermediate reduced graphs.

The computational use of the algorithm for circuit reduction of the triangular grid was first studied in ~\cite{Hendel}.   This approach provides additional numerical data strongly supporting several conjectures.  For example,
as $n$ goes to infinity, the resistance of the edges of the single-triangle graph resulting from reducing the triangular graph with $n$ rows 
$n-1$ times unexpectedly approaches $\frac{3}{2e}$ in limit. 

This paper, complements the study of asymptotic behavior  in~\cite{Barrett9} and~\cite{Hendel} by studying non-asymptotic behavior arising in reduced grids.  The main conjecture  characterizes, after reducing an arbitrary number of times an initial triangular grid all of whose resistance values are initially equal to one, when resistance values are less than, equal to, or greater than one.  A special case of the conjecture is proven. The main theorem identifies patterns of repeating edge labels arising in diagonals of a triangular grid reduced $s$ times provided the original grid had at least $4s$ rows of triangles.   This paper also improves upon the notation, concepts, and proof techniques introduced by the authors previously.

An outline of this paper is as follows. In Section~\ref{sec:def} we review previous applications of circuit theory to the triangular grid as well as previous notation and concepts which are extended as needed.
In Section \ref{sec:motivation} we present a simply-formulated conjecture identifying  edge resistances that are less than, equal to, or greater than one after reducing an arbitrary number of times an initial triangular grid with uniform labels of ones.  This conjecture motivated the result in Section \ref{sec:constcenter} about the repeating edge resistances found in the middle portions of reduced $n$-grids and in fact allows, in Section \ref{sec:edgeval}, the proof of a special case of the main conjecture. Section \ref{sec:constcenter} is preceded by
Section~\ref{sec:proofmethods}, which reviews and simplifies proof methods from \cite{Hendel} and Section~\ref{sec:illandconseq}, which illustrates the techniques of Section~\ref{sec:proofmethods}. The paper concludes with Section \ref{sec:open} which presents a simple numerical pattern in these reduced $n$-grids, illustrative of the plethora of patterns that apparently abound in these triangular grids and their reductions, pointing to a fertile hunting ground for new patterns, techniques, and proof methods.

\section{Definitions, Algorithms, Conventions and Notations}\label{sec:def}
Much of the following comes from \cite{Hendel} and this attribution is not repeated at each item. However,  where an idea
is extended this is made explicit.
More specifically, this section introduces

\begin{itemize}
\item Circuit transformations, and the $\Delta, Y$ functions,
\item The definition of the $n$-triangular-grid,
\item Conventions about upright triangles,
\item Notations for triangles and edges,
\item The reduction algorithm,
\item Notation for $Y$-legs and reduced triangles,
\item Definitions and notational conventions about symmetry and equality involving grids,
\item $d$-rims and the \textit{the upper left half},
\item Subgrids, boundaries, and notation.
\end{itemize}

\subsubsection*{Circuit transformations.} 
Throughout the paper we use the following well-known circuit transformations \begin{definition}[Series Transformation] Let $N_1$, $N_2$, and $N_3$ be nodes in a graph where $N_2$ is adjacent to only $N_1$ and $N_3$.  Moreover, let $R_A$ equal the resistance between nodes $N_1$ and $N_2$ and $R_B$ equal the resistance between nodes $N_2$ and $N_3$.  A series transformation transforms this graph by deleting $N_2$ and setting the resistance between $N_1$ and $N_3$ equal to $R_A + R_B$.\end{definition}

%\begin{definition}[Parallel Transformation] Let $N_1$ and $N_2$ be nodes in a multi-edged graph where $e_1$ and $e_2$ are two edges between $N_1$ and $N_2$ with resistances $R_A$ and $R_B$, respectively.   A parallel transformation transforms the graph by deleting edges $e_1$ and $e_2$ and adding a new edge between $N_1$ and $N_2$ with edge resistance $r = \left(\frac{1}{R_A} + \frac{1}{R_B}\right)^{-1}$.
%\end{definition}

A $\Delta$--Y transformation is a mathematical technique to convert three resistors labeling 3 edges of a 3-vertex graph  (in a $\Delta$ formation) to an equivalent system of three resistors labeling 3 edges of a 4-vertex graph ( in a ``Y'' formation) as illustrated in Figure~\ref{fig:dy}.  We formalize this transformation below. 
\begin{definition}[$\Delta$--Y transformation]\label{def:dy}
Let $N_1, N_2, N_3$ be nodes and $R_A$, $R_B$ and $R_C$ be given resistances as shown in Figure~\ref{fig:dy}.  The transformed circuit in the ``Y'' formation as shown in Figure~\ref{fig:dy} has the following resistances:
\begin{align}
  R_1 &= \frac{R_BR_C}{R_A + R_B + R_C} \\
  R_2 &= \frac{R_AR_C}{R_A + R_B + R_C} \\
  R_3 &= \frac{R_AR_B}{R_A + R_B + R_C}.
\end{align}
If $x,y$ and $z$ are the resistances of sides of a triangle then we introduce the function
 $ \Delta(x,y,z) =\frac{xy}{x+y+z}$.
 By convention, the first two arguments of $\Delta$ will be the labels of the two adjacent edges whose common vertex contains the $Y$-leg of the resulting computation.
\end{definition}
\begin{definition}[Y--$\Delta$ transformation]\label{def:yd}
Let $N_1, N_2, N_3$ be nodes and $R_1$, $R_2$ and $R_3$ be given resistances as shown in Figure~\ref{fig:dy}.  The transformed circuit in the ``$\Delta$'' format as shown in Figure~\ref{fig:dy} has the following resistances:
\begin{align}
  R_A &= \frac{R_1R_2+R_1R_3+R_2R_3}{R_1} \\
  R_B &= \frac{R_1R_2+R_1R_3+R_2R_3}{R_2} \\
  R_C &= \frac{R_1R_2+R_1R_3+R_2R_3}{R_3}.
\end{align}
If $x,y$ and $z$ are resistances of a $Y$ we define the function
$Y(x,y,z)=\frac{xy+yz+zx}{x}.$ 
By convention, the first argument of the $Y$ function will be the resistance of the $Y$ leg  not incident with the $\Delta$-edge begin computed.   
\end{definition}
 \begin{figure}
 \begin{center}
\resizebox{4in}{!}{ \begin{circuitikz}
 
 \draw (0,0) to [R, *-*,l=$R_B$] (4,0);
  \draw (0,0) to [R, *-*,l=$R_C$] (2,3.46);
  \draw (2,3.46) to[R, *-*,l=$R_A$] (4,0)
  {[anchor=north east] (0,0) node {$N_1$} } {[anchor=north west] (4,0) node {$N_3$}} {[anchor=south]  (2,3.46) node {$N_2$}};
  
  \draw (6,0) to [R, *-*,l=$R_1$] (8,1.73);
  \draw (6,3.46) to [R, *-*,l=$R_2$] (8,1.73);
 \draw (10.5,1.73) to [R, *-*,l=$R_3$] (8,1.73)
  {[anchor=north east] (6,0) node {$N_1$} } {[anchor=north west] (10.5,1.73) node {$N_3$}} {[anchor=south]  (6,3.46) node {$N_2$}};
 %   \draw (4,0) to [R] (4,4);
 %    --
 %  to[R] (1,1)   to[R] (0,2) --
 % (3,1) to[R] (2,2)
 ; 
 \end{circuitikz}}
 \end{center}
 \caption{ $\Delta$ and $Y$ circuits with vertices labeled as in Definition~\ref{def:dy}.}
 \label{fig:dy}
 \end{figure}
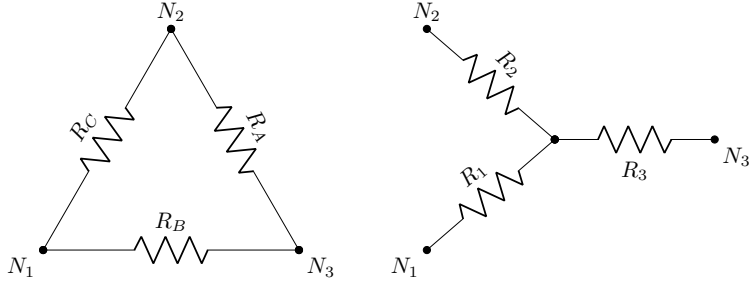
% \begin{proposition} Series transformations, parallel transformations, and $\Delta$--Y transformations yield equivalent circuits.
% \end{proposition}
% \begin{proof}
% See~\cite{oldbook} for a proof of this result.
% \end{proof}
% functions used to analyze electrical circuits.
% If $x,y$ and $z$ are the resistances of sides of a triangle then we let
%  $ \Delta(x,y,z) =\frac{xy}{x+y+z}$, Similarly, if $x,y$ and $z$ are resistances of a $Y$ then we let
% $Y(x,y,z)=\frac{xy+yz+zx}{x},$

\subsubsection*{Definition of $n$-grid.} There are a variety of ways to define graphs including using
an inductive approach, by general properties of graphs, and by
the adjacency-matrix. We present a simple definition by vertices and
edges.

\begin{definition}\label{def:ngrid}
An $n$-triagular-grid (usually abbreviated as an $n$-grid)
%or an $n-T-grid$ 
is any graph that is (graph-) isomorphic to the  graph,
whose vertices are all integer pairs $(x,y)=(2r+s,s)$ 
in the Cartesian plane, with $r$ and $s$ integer
parameters satisfying
$0 \le r \le n, 0 \le s \le n-r;$
  and whose edges
consist of any two vertices $(x,y)$ and $(x',y')$ with
either i) $x'-x=1, y'-y=1,$ 
 ii) $x'-x=2, y'-y=0,$ or
iii) $x'-x=1, y'-y=-1.$  See Figure~\ref{fig:3grid}.
\end{definition}

%Figure \ref{fig:3grid} illustrates the 3-grid.

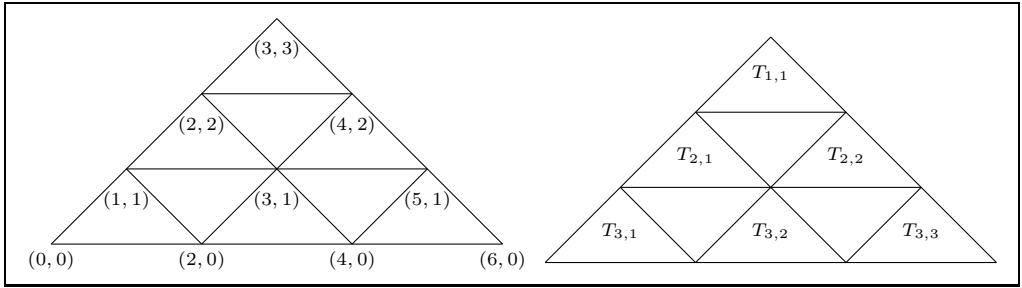
\begin{figure} 
\begin{center}
\begin{tabular}{|c|}
\hline
\begin{tikzpicture} [xscale=1,yscale =1]

\node [below] at (3,3.2) {    }; 
 
\draw (0,0)--(1,1)--(2,2)--(3,3);
\draw (2,0)--(3,1)--(4,2);
\draw (4,0)--(5,1);

\draw (3,3)--(4,2)--(5,1)--(6,0);
\draw (2,2)--(3,1)--(4,0);
\draw (1,1)--(2,0);

\draw (0,0)--(2,0)--(4,0)--(6,0);
\draw (1,1)--(3,1)--(5,1);

\draw (2,2) -- (4,2);

%added Nodes
\begin{scriptsize}
\node [below] at (3,2.8) {$(3,3)$};

\node [below] at (2,1.8) {$(2,2)$};
\node [below] at (4,1.8) {$(4,2)$};

\node [below] at (1,.8) {$(1,1)$};
\node [below] at (3,.8) {$(3,1)$};
\node [below] at (5,.8) {$(5,1)$};

\node [below] at (6,0) {$(6,0)$};
\node [below] at (4,0) {$(4,0)$};
\node [below] at (2,0) {$(2,0)$};
\node [below] at (0,0) {$(0,0)$};

\end{scriptsize}
\end{tikzpicture}

\begin{tikzpicture} [xscale=1,yscale =1]

\node [below] at (3,3.2) {    }; 
 
\draw (0,0)--(1,1)--(2,2)--(3,3);
\draw (2,0)--(3,1)--(4,2);
\draw (4,0)--(5,1);

\draw (3,3)--(4,2)--(5,1)--(6,0);
\draw (2,2)--(3,1)--(4,0);
\draw (1,1)--(2,0);

\draw (0,0)--(2,0)--(4,0)--(6,0);
\draw (1,1)--(3,1)--(5,1);

\draw (2,2) -- (4,2);

\begin{scriptsize}
\node  [above] at (1,.2) {$T_{3,1}$};  
\node  [above] at (3,.2) {$T_{3,2}$};
\node  [above] at (5,.2) {$T_{3,3}$};
\node  [above] at (2,1.2) {$T_{2,1}$};
\node  [above] at (4,1.2) {$T_{2,2}$};
\node  [above] at (3,2.3) {$T_{1,1}$};

\node [below] at (6,0) {$ $};
\node [below] at (4,0) {$ $};
\node [below] at (2,0) {$ $};
\node [below] at (0,0) {$ $};

%\node [right] at (0,-.75) {Panel B: A 3-grid   labeled by row and diagonal.};
\end{scriptsize}

\end{tikzpicture}
\\  \hline
\end{tabular}
\caption{A 3-grid embedded in the Cartesian Plane constructed using Definition~\ref{def:ngrid} (Left Panel) with the paper's conventions on row and diagonal coordinates (Right Panel).  Rows are labeled from top to bottom and diagonals are labeled from left to right as shown.  When necessary,  sides of triangles are indicated by the letters $L,R,B,$ standing for left, right, and base side.  Thus, $T_{2,1,B}$ is the base edge of the triangle in row 2 diagonal 1. Throughout the paper (e.g. in the description of the row reduction algorithm) we are only concerned with the upright-oriented triangles.}\label{fig:3grid}
\end{center}
\end{figure}

\color{black}

%\begin{notation}\label{ngrid1}  
%\textbf{Upright--oriented triangles.} Throughout the paper %we will find it useful to use heuristics. We perceive
%the $n$-grid as a triangular grid consisting of $n$
%rows of triangles. The $r$-th row of the $n$-grid has 
%$2r-1$ triangles of which $r$ are in the \emph{upright-%oriented} position and the remaining $r-1$ triangles are in %the downward oriented position
% as illustrated in Figure \ref{fig:3grid}.
%For purposes of describing
%the row-reduction step we are only concerned with the $r$ %upright-oriented
%equilateral triangles; consequently,
%throughout the paper when speaking about triangles, we will %assume the reference is to  upright-oriented triangles.
%\end{notation}

%\begin{notation} \label{ngrid2} 
%\textbf{Triangle and edge notation.} The following %terminology and notation will be used throughout the paper. %We will refer to the \emph{left}, \emph{right} and %\emph{base}\
%sides of the triangles. The left, right, and base sides of a %triangle will also
%be called the   1-edge, 2-edge, and 3-edge. 
%We let $\langle r,d \rangle_n, 1 \le d \le r,$ refer  to the %(upright oriented) triangle
%in row $r$ and diagonal $d$
% with diagonals enumerated left to right, and rows %enumerated top to bottom. If each 
%edge in an $n$-grid is labeled with an electrical resistance %(which is equivalent to the reciprocal of the edge weight), %then,
%$\langle r,d,e \rangle, e \in \{1,2,3\}$ refers to the %resistance of edge
%$e$ of triangle $\langle r,d \rangle.$
%\end{notation}

\subsubsection*{The Reduction Algorithm.} We recall the algorithm presented in ~\cite{evans2021algorithmic,Hendel}  for performing reductions on the $n$-grid as defined by Definition \ref{def:ngrid} and using
the notational conventions mentioned in the caption of Figure \ref{fig:3grid}.    The algorithm will take an $n$-grid and reduce the number of rows by 1, resulting in an $n-1$ grid. Since the algorithm uses circuit functions that produce equivalent electrical resistance this $n-1$ grid is constructed so that the resistance between two corner vertices remains the same.
\begin{algorithm}\label{alg:supertri}

\item Given an $n$-grid with labeled edges, the   
\emph{row-reduction} of this labeled $n$-grid (to a labeled $n-1$-grid) 
refers to the sequential performance of the following steps:
\begin{itemize}
\item (Step A) Identify the edge-labels of relevant triangles in the original $n$-grid
\item (Step B) Perform a $\Delta-Y$ transformation,
  on each (upright-oriented) triangle.  

\item (Step C) While we retain the 3 corner tails, that is, the edges with a degree 1 vertex (these are the dashed lines in Panel B) for possible later computation (see Remark \ref{rem:tails}), we discard them from the diagram for ease of understanding, since this discarding does not affect the resistance labels of the edges of the reduced grid displayed in Panel E

\item (Step D) Starting from any corner vertex (degree-2 node) and say going clockwise around the boundary of the Panel C, perform series transformations on every pair of incident edges.

\item (Step E) Perform $Y-\Delta$ transformations on the remaining non-boundary edges.  

\end{itemize}
\end{algorithm}

%\color{blue}
The algorithm is illustrated in Figure~\ref{fig:5panels}.

\begin{figure}[ht!]
\begin{center}
\begin{tabular}{||c|c|c|c|c||}
\hline
\begin{tikzpicture}[yscale=.5,xscale=.5]

\node [below] at (3,3.2) {    }; 
 
\draw (0,0)--(1,2)--(2,0)--(0,0);
\draw (2,0)--(3,2)--(4,0)--(2,0);
\draw (4,0)--(5,2)--(6,0)--(4,0); 
\draw (1,2)--(2,4)--(3,2)--(1,2);
\draw (3,2)--(4,4)--(5,2)--(3,2);
\draw (2,4)--(3,6)--(4,4)--(2,4);

\node [below] at (3,6.2) {     };
\node [below] at (3,-.5) {Panel A};
\end{tikzpicture} 
& 
\begin{tikzpicture}[xscale=.5,yscale=.5] 
\draw  (1,1)--(1,2)--(2,3)--(2,4)--(3,5);
\draw  (5,1)--(5,2)--(4,3)--(4,4)--(3,5);
\draw (1,1)--(2,0)--(3,1)--(3,2)--(2,3);
\draw (3,1)--(4,0)--(5,1)--(5,2)--(4,3)--(3,2); 
 
\draw [dashed] (6,0)--(5,1);
\draw [dashed] (0,0)--(1,1);
 \draw [dashed] (3,5)--(3,6);
 
\node [below] at (3,-.5) {Panel B};
\end{tikzpicture}&
\begin{tikzpicture}[yscale=.5,xscale=.5]
 
\draw [dashed] (1,1)--(1,2)--(2,3)--
(2,4)--(3,5);
\draw [dashed] (5,1)--(5,2)--(4,3)--(4,4)--(3,5);
\draw [dashed] (1,1)--(2,0)--(3,1)--(4,0)--(5,1);
\draw (3,1)--(3,2)--(2,3)--(3,2)--(4,3);

\node [below] at (3,-.5) {Panel C};
 
\end{tikzpicture}&
\begin{tikzpicture}[yscale=.5,xscale=.5]
 
\draw (1,1)--(2,3)--(3,5);
\draw (5,1)--(4,3)--(3,5);
\draw (1,1)--(3,1)--(5,1);
\draw [dotted] (2,3)--(3,2)--(4,3);
\draw [dotted] (3,2)--(3,1); 
 
 \node [below] at (3,-.5) {Panel D};
\end{tikzpicture}&
\begin{tikzpicture}[yscale=.5,xscale=.5]
 
\draw (1,1)--(2,3)--(3,5);
\draw (5,1)--(4,3)--(3,5);
\draw (1,1)--(3,1)--(5,1);
\draw (2,3)--(4,3)--(3,1)--(2,3); 
 
 \node [below] at (3,-.5) {Panel E};
 
\end{tikzpicture}
\\ \hline
\end{tabular}

\end{center}
\caption{Illustration of one row reduction, Algorithm \ref{alg:supertri}, on a 3-grid all of whose edge-labels are 1. The panel labels correspond to the five steps of  Algorithm \ref{alg:supertri} }\label{fig:5panels}
\end{figure}
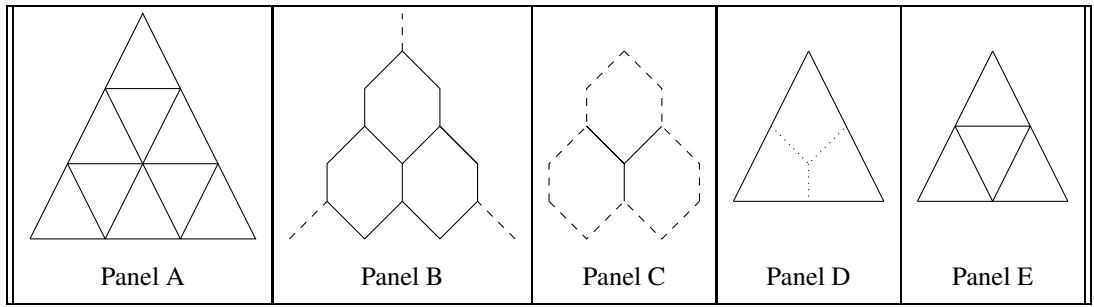
\color{black}
 
\begin{remark}\label{rem:tails}
The idea of retaining the corner tails for future computations, but omitting them in future reduction steps may seem unusual without appropriate context.  In ~\cite{Barrett9,evans2021algorithmic, Hendel} the problem of determining the resistance distance between the degree two vertices in the original graph was considered.  For this problem it is essential that these tail values be retained to determine the resistance distance. However, to reduce the 3 grid displayed in Panel A to the 2-grid displayed in Panel E, these corner tails are not needed. In fact, the determination of the resistance distance between these vertices led to the reduction algorithm and the results detailed in this paper.
\end{remark}

\begin{definition}\label{def:rr}
We call a single pass--through of Algorithm~\ref{alg:supertri} (i.e., performing steps A-E one time) a reduction.
\end{definition}

We next indicate more notational conventions used throughout the paper.
\begin{itemize}
\item We will use clock notation (without or without row and diagonal coordinates) to refer to the $Y$-legs arising from a $\Delta-Y$ transformation.  Thus     $Y_{12}, Y_{8}, Y_{4}$ refer to the legs of an
upside-down $Y;$ we abuse notation and let $Y_{12}, Y_{8}, Y_{4}$ also refer to the underlying resistances, the meaning always being clear from context. Thus, $Y_{12,2,1}$ indicates the resistance of the $Y_{12}$ edge resulting from applying a $\Delta-Y$ transform to the triangle $T_{2,1}.$
\item We let $T^k$ refer to the $k$-grid all of whose edge resistances are 1. $T^{k,k'}$ is the $k'$-grid obtained from $T(k)$ by applying $k-k'$ reduction steps.
\item For any $k,k',$ $T^{k,k'}$ has various symmetries. Symmetry, in this paper, refers to vertical, slide,  and rotational symmetry
(by $\frac{\pi}{3}$ and $\frac{2\pi}{3})$ of the underlying grid regarded
as a labeled graph (See Definition \ref{def:gridsymmetry})
\item When performing a single reduction from $T^{k,k-i+1}$ to
$T^{k,k-i}$ it may be necessary to distinguish between triangles in the parent grid, $T^{k,k-i+1}$ and child grid $T^{k,k-i}.$ In such a case we will use double subscript or superscript notation e.g.   $T^{k,k-i}_{r,d}$ is the triangle in grid $T^{k,k-i}$ in row $r$ and
diagonal $d.$ 
\end{itemize}
\begin{definition}\label{def:pretype} A statement of the form $T_{r,d} = T_{r',d'}$ means 
$T_{r,d,X}=T_{r',d',X}, X \in \{L,R,B\}.$ 
The statement $T^{k,k'
} =1$  by convention means $T^{k,k'}_{r,d,X}=1, 1 \le r \le k', 1 \le d \le r, X \in \{R, L, B\}. $
The \textit{pre-type} of a triangle $T_{r,d}$ is the ordered triple $(T_{r,d,L}, T_{r,d,R}, T_{r,d,B}).$ Two triangles have the same \textit{type},   if their pre-types are identical under symmetry. When indicating types we can use any pre-type  ordering associated with it.
\end{definition}

\subsubsection*{$d$-rims, subgrids, and the upper left half.} 
Hendel \cite[Equations (29),(30)]{Hendel} introduced the idea of perceiving the $n$-grid as a collection of concentric triangular annuli or triangular rims. He also introduced the idea of the \textit{upper left half} of the $n$-grid.
Prior to defining these concepts, we present a coordinate independent definition of the symmetries.

 \begin{definition}[Grid Symmetry] \label{def:gridsymmetry}
 \cite[Definition 9.1]{Hendel}
 When discussing $n$-grids,
by vertical symmetry we mean that for $1 \le r \le n, 1 \le d \le r,$ 
$$
 T_{r,d,L} = T_{r,r+1-d,R}, \qquad
 T_{r,d,R} = T_{r,r+1-d,L}, \qquad
 T_{r,d,B} = T_{r,r+1-d,B},
$$
 by rotational symmetry, we mean that for $1 \le r \le n, 1 \le d \le r,$
$$
T_{r,d,L} = 
T_{n+d-r,n+1-r,R}, \qquad
T_{r,d,R} = 
T_{n+d-r,n+1-r,B}, \qquad
T_{r,d,B} = 
T_{n+d-r,n+1-r,L},
$$
and by {slide} symmetry we mean  
$$
T_{r,d,L} = 
T_{n+d-r,d,L}, \qquad
1 \le d \le r \le n.
$$
\end{definition}

It is straightforward to verify that an assumption of vertical and slide symmetry is equivalent to an assumption of vertical and rotational symmetry. Note also, that if the $n$-grid is represented by $n$-rows of equilateral triangles then the above definitions of vertical and rotational symmetry coincide with the coordinate definitions of vertical symmetry and clockwise rotation by $\frac{\pi}{3}.$
 
\begin{definition}[The Upper Left Half]\label{def:upperlefthalf} Given integers $n$ and $m$ with $0 \le n \le m-1,$
the \emph{upper left half} of the $n$ grid, $T^{m,n} $ arising from $m-n$ reductions of $T^m,$ is the set of triangles
\begin{equation}\label{equ:Upperlefthalf}
    S = \left\{T_{r,d}: 
         1 \le d \le \left\lfloor \frac{n+2}{3} \right\rfloor,
        2d-1 \le r \le \left\lfloor \frac{n+d}{2} \right\rfloor\right\}.
\end{equation}
\end{definition}
 
 \begin{example}\label{exa:upperlefthalf}
 If $n=3,$ (see panel A in Figure \ref{fig:5panels}) the upper left half consists of the 
 triangles $T_{r,d}, d=1, r=1,2.$
 If $n=7$ (see Figure \ref{fig:7gridrims}) the upper left half consists of the union of the triangle sets 
 $T_{r,d}. d=1, 1 \le r \le 4,$
 $T_{r,d}, d=2, 3 \le r \le 4,$
 and $T_{5,3}.$  
 \end{example} 
 
 The importance of the upper left half is the following result which  captures the implications of the symmetry of the $n$-grids \cite[Corollary 9.6]{Hendel}. 
\begin{lemma} \label{lem:upperlefthalf}
With $n,m$ defined as in Definition \ref{def:upperlefthalf} all edge values in $T^{n,m}$ are determined by the edge values of the upper left half.
\end{lemma}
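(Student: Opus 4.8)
The plan is to read the relations of \cref{def:gridsymmetry} as a group action and to exhibit the upper left half $S$ as a fundamental domain for that action. Write $N$ for the number of rows of the grid $T_{n,m}$, so that the bounds of \eqref{equ:Upperlefthalf} read $1\le d\le\lfloor(N+2)/3\rfloor$ and $2d-1\le r\le\lfloor(N+d)/2\rfloor$. The vertical reflection $v:\langle r,d\rangle\mapsto\langle r,r+1-d\rangle$ and the slide map $\sigma:\langle r,d\rangle\mapsto\langle N+d-r,d\rangle$ are involutions of the triangle set $\{\langle r,d\rangle:1\le d\le r\le N\}$, and by the remark following \cref{def:gridsymmetry} they generate the same group $G$ as $v$ together with the order-three rotation, namely a copy of the dihedral group $D_3$. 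Each relation in \cref{def:gridsymmetry} does two things at once: it sends a triangle to another triangle and records a permutation of the edge slots $\{1,2,3\}$, so for every $g\in G$ the three edge values of $g\langle r,d\rangle$ are determined by the three edge values of $\langle r,d\rangle$. The lemma thus follows once we show that every triangle is carried into $S$ by some $g\in G$: fixing the edge values on $S$ then fixes them on each $G$-orbit, hence on all of $T_{n,m}$.

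First I would reinterpret the two nontrivial inequalities defining $S$ as foldings across two reflection axes of $G$. The condition $2d-1\le r$ is equivalent to $d\le(r+1)/2$, which says that $\langle r,d\rangle$ lies on the near side of the vertical median $d=(r+1)/2$, the fixed line of $v$; if it fails, applying $v$ restores it. Likewise $r\le\lfloor(N+d)/2\rfloor$ is equivalent to $2r\le N+d$, which says that $\langle r,d\rangle$ lies on the near side of the slide axis $2r=N+d$, the fixed line of $\sigma$; if it fails, applying $\sigma$ restores it. I would then note that the remaining inequality $d\le\lfloor(N+2)/3\rfloor$ is redundant: from $2d-1>(N+d)/2\iff 3d>N+2$ one sees that for $d>(N+2)/3$ the interval $2d-1\le r\le\lfloor(N+d)/2\rfloor$ is empty, so this bound is implied by the other two. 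Hence $S$ is exactly the closed wedge cut out by the vertical median and the slide axis, two reflection axes of $G$ whose composition $v\sigma$ is the order-three rotation; the closed wedge between two such adjacent axes is the standard fundamental domain of a dihedral group.

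The covering argument then runs as follows. Beginning from an arbitrary $\langle r,d\rangle$, alternately apply $v$ whenever $d>(r+1)/2$ and $\sigma$ whenever $2r>N+d$. Since $\langle v,\sigma\rangle=G$ is finite and each fold moves the triangle toward the wedge, the process halts at a triangle satisfying both inequalities, that is, a triangle of $S$. I would verify termination and the boundary bookkeeping directly in the discrete $(r,d)$ coordinates, checking the floor functions and treating separately the triangles on an axis, on the central diagonal, and at the three corners, where the stabilizer in $G$ is nontrivial; a short count then confirms that $S$ meets each orbit exactly once, so that $S$ is a genuine fundamental domain, although for the lemma only surjectivity (that every orbit meets $S$) is required.

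The main obstacle I anticipate is this last, discrete part: proving that the alternating folding really terminates inside $S$ and interacts correctly with the floor functions, together with the careful handling of the lower-dimensional orbits, whose nontrivial stabilizers force internal relations among a single triangle's own three edge labels (for instance $v$ forces $\langle r,d,1\rangle=\langle r,d,2\rangle$ on the vertical median). The remaining ingredients—the group-theoretic packaging of the relations and the equivalence of the two inequality descriptions—are routine once the reflection axes have been correctly identified.
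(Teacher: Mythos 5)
Your argument is correct, and it is worth noting that the paper itself supplies no proof of this lemma at all: it is imported wholesale from \cite[Corollary 9.6]{Hendel} with only the gloss that it ``captures the implications of the symmetry.'' Your group-action reading is exactly the intended mechanism, and your two structural observations check out: $v\sigma$ is indeed the order-three rotation of \cref{def:gridsymmetry}, and the bound $d\le\lfloor(N+2)/3\rfloor$ in \eqref{equ:Upperlefthalf} is redundant, since $2(2d-1)\le 2r\le N+d$ forces $3d\le N+2$. The one place where you anticipate real work --- termination of the alternating folding and the floor-function bookkeeping --- can be made to evaporate entirely by a change of coordinates: set $\alpha=d-1$, $\beta=r-d$, $\gamma=N-r$, so that $\alpha+\beta+\gamma=N-1$ and $\alpha,\beta,\gamma\ge 0$ parametrize the triangles. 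A direct computation shows $v$ swaps $\alpha$ and $\beta$ (fixing $\gamma$) and $\sigma$ swaps $\beta$ and $\gamma$ (fixing $\alpha$), so $G$ acts as the full symmetric group on the triple, and the two defining inequalities $2d-1\le r$ and $2r\le N+d$ become precisely $\alpha\le\beta$ and $\beta\le\gamma$. Thus $S$ is the ``sorted'' region, every orbit meets it (sort the triple), your folding is bubble sort on three elements and halts in at most three steps, and no separate case analysis for axis or corner triangles is needed for surjectivity --- the nontrivial stabilizers only matter for your parenthetical remark that symmetry forces internal equalities such as $\langle r,d,1\rangle=\langle r,d,2\rangle$ on the median, which is a consequence of the hypothesis rather than an obstacle. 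The only point to keep explicit is the one you already flag: each relation in \cref{def:gridsymmetry} transports a specific edge slot to a specific edge slot, so the action is on (triangle, slot) pairs and the three labels of $g\langle r,d\rangle$ are determined by those of $\langle r,d\rangle$; since each edge of the grid is the $e$-slot of exactly one upright triangle, fixing the labels on $S$ fixes them everywhere.
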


 \begin{definition}\label{def:ssubgrid}\cite[Equation (29)]{Hendel}
 With $n,m$ as in Definition \ref{def:upperlefthalf} we define for $s \ge 1$
 the $s$ subgrid of $T^{m,n}$ notationally indicated by $T^{n,m,s}$ to be the subgrid of $T^{m,n}$ with corners 
 $ T_{2s-1,s}, T_{n+1-s,s}, T_{n+1-s,n+2-2s}$   The  triangle-border or triangle boundary of $T^{m,n,s}$ notationally indicated $\partial T^{m,n,s}$ indicates
  the union of triangle sets, 
$ T_{r,s}, 2s-1 \le r \le n+1-s,
 T_{n+1-s,d}, s \le d \le n+2-2s,
 T_{r,(r-(s-1)}, 2s-1 \le r \le n+1-s.$
  (The $s$ subgrid  is also called the $s-rim.$)
 The edge-border or edge-boundary of the $T^{m,n,s}$  notationally indicated by
  $\partial^2 T^{n,m,s}$ indicates the union of edges
$ T_{r,s,L}, 2s-1 \le r \le n+1-s,
 T_{n+1-s,d,B}, s \le d \le n+2-2s,
 T_{r,(r-(s-1),R}, 2s-1 \le r \le n+1-s,$
  where throughout we have omitted the  superscript 
  $(m,n)$ for readability (that is, $T$ throughout refers to $T^{m,n}.)$
  
  The interior of $T^{m,n,s}$ notationally indicated by $Int(T^{m,n,s})$ is defined as the collection of edges in $T^{m,n,s}$ minus  the edges on the edge border of  $T^{m,n,s}$ (that is, $Int(T^{m,n,s}) = T^{m,n,s} - \partial^2 T^{m,n,s}.$)
 \end{definition}
 
\begin{figure}[ht!]
\begin{center}
\begin{tikzpicture}[scale = .6, line cap=round,line join=round,>=triangle 45,x=1.0cm,y=1.0cm,scale=.8]

\draw [line width=.8pt, color=blue] (-3.,10.577350269189628)--  (-4.,8.84529946162075);

\draw [line width=.8pt, color=blue] (-4.,8.84529946162075)--  (-5.,7.113248654051867);

\draw [line width=.8pt, color=blue] (-5.,7.113248654051867)--  (-6.,5.381197846482991);

\draw [line width=.8pt, color=blue] (-6.,5.381197846482991)-- (-7.,3.6491470389141107);

\draw [line width=.8pt, color=blue] (-7.,3.6491470389141107)--(-8.,1.9170962313452288)--(-9.,0.1850454237763482)-- (-10.,-1.5470053837925324);

\draw [line width=.8pt, color=blue] (-3.,10.577350269189628)-- (-2.,8.845299461620748);

\draw [line width=.8pt, color=blue] (-2.,8.845299461620748)--  (-1.,7.113248654051868);

\draw [line width=.8pt, color=blue] (-1.,7.113248654051868)-- (0.,5.38119784648299);

\draw [line width=.8pt, color=blue] (0.,5.38119784648299)-- (1.,3.6491470389141094);
\draw [line width=.8pt, color=blue] (1.,3.6491470389141094)-- (2.,1.9170962313452288)--(3.,0.1850454237763482) --(4.,-1.5470053837925324);
\draw [line width=.8pt, color=blue] (4.,-1.5470053837925324)-- (2.,-1.5470053837925324)-- (0.,-1.5470053837925324)-- (-2.,-1.5470053837925324)-- (-4.,-1.5470053837925324)-- (-6.,-1.5470053837925324)-- (-8.,-1.5470053837925324)-- (-10.,-1.5470053837925324);
\draw [line width=.8pt,dashed,color=blue] (1.,3.6491470389141094)-- (-1.,3.6491470389141134);
\draw [line width=.8pt,dashed, color=red] (-3.,3.6491470389141174)-- (-1.,3.6491470389141134);
\draw [line width=.8pt,dashed, color=red] (-3.,3.6491470389141174)--  (-5.,3.6491470389141125);
\draw [line width=.8pt,dashed, color=blue] (-5.,3.6491470389141125)-- (-7.,3.6491470389141107);

\draw [line width=.8pt,dashed,color=blue] (-4.,8.84529946162075)--(-2.,8.845299461620748)-- (-3.,7.113248654051869)--(-4.,8.84529946162075);
\draw [line width=.8pt, dashed,color=blue] (-3.,7.113248654051869)-- (-5.,7.113248654051867);
\draw [line width=.8pt, dashed,color=blue] (-4.,5.381197846482992)-- (-5.,7.113248654051867);
\draw [line width=.8pt,color=red]  (-4.,5.381197846482992)--(-3.,7.113248654051869);
\draw [line width=.8pt,color=red] (-3.,7.113248654051869)--  (-2.,5.381197846482992);
\draw [line width=.8pt, dashed,color=blue] (-1.,7.113248654051869)--  (-2.,5.381197846482992);
\draw [line width=.8pt, dashed,color=blue] (-1.,7.113248654051868)--(-3.,7.113248654051869);
\draw [line width=.8pt, dashed,color=blue] (-6.,5.381197846482991)--(-4.,5.381197846482992);
\draw [line width=.8pt, dashed,color=blue](-5.,3.6491470389141125)--(-6.,5.381197846482991);
\draw [line width=.8pt,color=red](-5.,3.6491470389141125)--(-4.,5.381197846482992);
\draw [line width=.8pt, dashed,color=red] (-4.,5.381197846482992)--  (-2.,5.381197846482992);
\draw [line width=.8pt, dashed,color=red] (-3.,3.6491470389141174)--(-4.,5.381197846482992);
\draw [line width=.8pt, dashed,color=red] (-3.,3.6491470389141174)--(-2.,5.381197846482992);
\draw [line width=.8pt, dashed,color=blue] (-2.,5.381197846482992)-- (0.,5.38119784648299)-- (-1.,3.6491470389141134);
\draw [line width=.8pt,color=red] (-1.,3.6491470389141134)--(-2.,5.381197846482992);
\draw [line width=.8pt,color=red] (-1.,3.6491470389141134)--(0.,1.9170962313452288)--(1.,0.1850454237763482);
\draw [line width=.8pt,color=red] (-5.,3.6491470389141134)--(-6.,1.9170962313452288)--(-7.,0.1850454237763482);
\draw [line width=.8pt, dashed,color=blue] (1.,3.6491470389141134)--(0.,1.9170962313452288)--(2.,1.9170962313452288)--(1.,0.1850454237763482);
\draw [line width=.8pt, dashed,color=blue] (-7.,3.6491470389141134)--(-6.,1.9170962313452288)--(-8.,1.9170962313452288)--(-7.,0.1850454237763482);
\draw [line width=.8pt, dashed,color=red] (-1.,3.6491470389141134)--(-2.,1.9170962313452288)--(0.,1.9170962313452288)--(-1.,0.1850454237763482)--(-2.,1.9170962313452288)--(-3.,0.1850454237763482)--(-4.,1.9170962313452288)--(-5.,0.1850454237763482)--(-6.,1.9170962313452288)--(-4.,1.9170962313452288)--(-5.,3.6491470389141134);
\draw [line width=.8pt,color=green] (-3.,3.6491470389141174)--(-2.,1.9170962313452288)-- (-4.,1.9170962313452288)-- (-3.,3.6491470389141174);
\draw [line width=.8pt, color=red] (1.,0.1850454237763482)-- (-1.,0.1850454237763482)-- (-3.,0.1850454237763482)-- (-5.,0.1850454237763482) -- (-7.,0.1850454237763482);
\draw [line width=.8pt, dashed,color=blue](1.,0.1850454237763482)--(3.,0.1850454237763482)-- (2.,-1.5470053837925324)--(1.,0.1850454237763482)-- (0.,-1.5470053837925324)-- (-1.,0.1850454237763482)-- (-2.,-1.5470053837925324)-- (-3.,0.1850454237763482)-- (-4.,-1.5470053837925324)-- (-5.,0.1850454237763482) -- (-6.,-1.5470053837925324)-- (-7.,0.1850454237763482)-- (-8.,-1.5470053837925324)--(-9.,0.1850454237763482) -- (-7.,0.1850454237763482);
%--
\begin{scriptsize}

\draw [fill=black] (-3.,10.577350269189628) circle (2.5pt);

\draw [fill=black] (-2.,5.381197846482992) circle (2.5pt);
 
\draw [fill=black] (-1.,7.113248654051868) circle (2.5pt);
 
\draw [fill=black] (-2.,8.845299461620748) circle (2.5pt);
 
\draw [fill=black] (-4.,8.84529946162075) circle (2.5pt);
 
\draw [fill=black] (-5.,7.113248654051867) circle (2.5pt);
 
\draw [fill=black] (-4.,5.381197846482992) circle (2.5pt);
 
\draw [fill=black] (-1.,3.6491470389141134) circle (2.5pt);
 
\draw [fill=black] (0.,5.38119784648299) circle (2.5pt);
 
\draw [fill=black] (1.,3.6491470389141094) circle (2.5pt);
 
\draw [fill=black] (-6.,5.381197846482991) circle (2.5pt);
 
\draw [fill=black] (-5.,3.6491470389141125) circle (2.5pt);
 
\draw [fill=black] (-7.,3.6491470389141107) circle (2.5pt);
 
\draw [fill=black] (-3.,3.6491470389141174) circle (2.5pt);

\draw [fill=black] (2.,1.9170962313452288) circle (2.5pt);

\draw [fill=black] (0.,1.9170962313452288) circle (2.5pt);

\draw [fill=black] (-2.,1.9170962313452288) circle (2.5pt);

\draw [fill=black] (-4.,1.9170962313452288) circle (2.5pt);

\draw [fill=black] (-6.,1.9170962313452288) circle (2.5pt);

\draw [fill=black] (-8.,1.9170962313452288) circle (2.5pt);

\draw [fill=black] (3.,0.1850454237763482) circle (2.5pt);

\draw [fill=black] (1.,0.1850454237763482) circle (2.5pt);

\draw [fill=black] (-1.,0.1850454237763482) circle (2.5pt);

\draw [fill=black] (-3.,0.1850454237763482) circle (2.5pt);

\draw [fill=black] (-5.,0.1850454237763482) circle (2.5pt);

\draw [fill=black] (-7.,0.1850454237763482) circle (2.5pt);

\draw [fill=black] (-9.,0.1850454237763482) circle (2.5pt);

\draw [fill=black] (4.,-1.5470053837925324) circle (2.5pt);

\draw [fill=black] (2.,-1.5470053837925324) circle (2.5pt);

\draw [fill=black] (0.,-1.5470053837925324) circle (2.5pt);

\draw [fill=black] (-2.,-1.5470053837925324) circle (2.5pt);

\draw [fill=black] (-4.,-1.5470053837925324) circle (2.5pt);

\draw [fill=black] (-6.,-1.5470053837925324) circle (2.5pt);

\draw [fill=black] (-8.,-1.5470053837925324) circle (2.5pt);

\draw [fill=black] (-10.,-1.5470053837925324) circle (2.5pt);

\end{scriptsize}

\end{tikzpicture}
 
\caption{An illustration of the triangular rims.  The edge boundaries of these rims are colored as follows: $T^{n,4,1}$ is blue, $T^{n,4,2}$ is red, $T^{n,4,3}$ is green, where $n$ is the number of rows in the initial $n$-grid $T^n$ all of whose edges are labeled 1 and which was reduced $n-7$ times. The boundaries of each rim are solid, and the interior edges are dashed. }
\label{fig:7gridrims}
\end{center}
\end{figure}
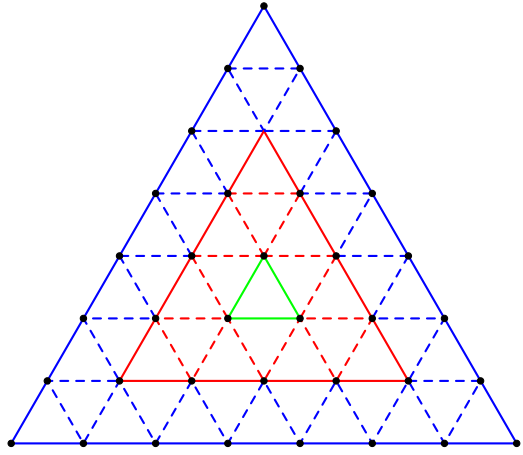

  \begin{example}\label{exa:7gridrims}
 Consider the 7 grid shown in Figure \ref{fig:7gridrims}. The 1-rim has corners $T_{1,1}, T_{7,1}, T_{7,7}$
 $\langle 1,1 \rangle,\langle 7,1 \rangle, \langle 7,7 \rangle$ and is colored blue; the 2-rim has corners
 $T_{3,2}, T_{6,2}, T_{6,5}$
  and is colored red; the three rim is the singleton green triangle $T_{5,3}.$  
 \end{example}

\section{A Conjecture}\label{sec:motivation}
 
In an attempt to understand the reduction algorithm, the authors began  to look at the ``initial behavior'' of the edge resistances as an $n$-grid undergoes the stages of the reduction algorithm.  The authors noticed that with each transformation, the location of edges with resistance equal to one decreased in a predictable pattern,  Moreover, it was noticed that the non-one edges also maintained a specific pattern dependent on their location in a rim.  We formalize these observations in the following conjecture.

\begin{conjecture}[Vanishing Ones Conjecture]\label{con:main} 
For integer $n \ge 1,$ we have the following:\\
(a)  $T^{n,n,1}=1.$\\
(b) For $1 \le s \le \lfloor \frac{n+1}{4} \rfloor,$  $Interior(T^{n,n-s,s})$ is equal to one. \\
(c) With $s$ as in (b), for an edge in the complement of the interior of an $s$-subgrid: its edge value is strictly less than one if it lies on the edge boundary of some $s'$-grid, $1 \le s' < s$; its edge value is strictly greater than one otherwise.\\
(d) For any $s,$ $\lfloor \frac{n+1}{4} \rfloor < s \le n-1$ there are no edges with label 1 in 
$T^{n,n-s}$ (the ones ``vanish").\\
\end{conjecture}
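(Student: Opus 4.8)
The plan is to reduce everything to three elementary facts about the transformation functions acting on an all-ones configuration and then to propagate them inward. First I would record the local fixed point of one reduction: a $\Delta$--$Y$ transform applied to an all-ones triangle produces three legs $\Delta(1,1,1)=\tfrac13$; two such legs meeting at a degree-two (boundary) vertex combine under the series step to $\tfrac13+\tfrac13=\tfrac23$; and three such legs meeting at a degree-three (interior) vertex recombine under the $Y$--$\Delta$ step to $Y(\tfrac13,\tfrac13,\tfrac13)=1$. Hence an interior vertex all three of whose incident upright triangles are all-ones contributes a new all-ones triangle, while the only edges that can fall below one in a single reduction of an all-ones region are those produced by the series step along the outer boundary, where they equal $\tfrac23$. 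Part (a) is then immediate, since $T_{n,n}=T_n$ is all-ones by Definition \ref{def:ngrid}, so $T_{n,n}^1$, being the full grid, is identically one.

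For part (b) I would argue by induction on the number of reductions, the crux being a locality (finite-speed-of-propagation) lemma: the label of any edge of the child grid $T_{n,n-i}$ is a function of the labels of a bounded neighborhood — a constant number of rims (Definition \ref{def:ssubgrid}) — of the parent grid $T_{n,n-i+1}$. Consequently the set of edges that can differ from one retreats inward by at most a fixed number of rims per reduction. Tracking this retreat diagonal by diagonal is exactly the ``repeating edge labels in diagonals'' content of the main theorem, and carrying out the count shows that after $s$ reductions the all-ones region still contains $Interior(T_{n,n-s}^s)$ for every $s\le\lfloor\tfrac{n+1}{4}\rfloor$ (the ``at least $4s$ rows'' hypothesis). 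Throughout I would invoke Lemma \ref{lem:upperlefthalf} to restrict all bookkeeping to the upper left half and then transport conclusions to the remaining sectors by the symmetry of $T_{n,n-s}$, cutting the casework by a factor of six.

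For parts (c) and (d) the all-ones / not-all-ones dichotomy no longer suffices: one must control the actual values on the finitely many rims bordering the shrinking all-ones core. Here I would set up a rim-indexed recurrence, expressing the labels on the $s'$-rim of $T_{n,n-s}$ in terms of the labels on rims $s'-1,s',s'+1$ of $T_{n,n-s+1}$ via the composed $\Delta$--$Y$, series, and $Y$--$\Delta$ maps, and then study the sign of (value $-1$) as a function of $s'$ and of the reduction count. The pattern asserted in (c) — values below one on the inner rims and above one otherwise — should follow from showing the recurrence preserves the corresponding sign pattern, seeded by the boundary value $\tfrac23<1$ computed above; part (d) should then follow by showing that once $s>\lfloor\tfrac{n+1}{4}\rfloor$ the all-ones core has been entirely consumed, so no rim carries the exact value one.

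The main obstacle is this last analysis. Deciding whether an edge is exactly one needs only the locality lemma and the single identity $Y(\tfrac13,\tfrac13,\tfrac13)=1$, but pinning down the direction of the strict inequality in (c), and proving that the value-one locus disappears at exactly the threshold $\lfloor\tfrac{n+1}{4}\rfloor$ rather than merely eventually, demands a quantitative invariant surviving the nonlinear $Y$--$\Delta$ map, where a small leg forces $Y(a,b,c)=b+c+\tfrac{bc}{a}$ above one. I expect the sign of (value $-1$) to be tractable along a rim interior, where the recurrence is effectively one-dimensional, but to become delicate at the three corners and on the $s$-rim adjacent to the all-ones core, where legs of markedly unequal magnitude are combined; securing a clean monotonicity statement there is the step I anticipate being hardest.
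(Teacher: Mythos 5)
First, note that the statement you are proving is labelled a conjecture, and the paper itself establishes only parts (a) and (b) in full, together with a special case of (c)--(d) for the left boundary edges of diagonal 1; a complete blind proof was therefore not to be expected. Your treatment of (a) and (b) is essentially the paper's: the identities $\Delta(1,1,1)=\frac{1}{3}$, $\frac{1}{3}+\frac{1}{3}=\frac{2}{3}$, and $Y(\frac{1}{3},\frac{1}{3},\frac{1}{3})=1$ are exactly the content of Corollary \ref{cor:tnnminus1istwothirds} and the All-One Corollary \ref{cor:allones}, your locality lemma is the Six Triangles Corollary \ref{cor:6triangles}, and its consequence --- that the non-one region advances inward by at most one rim per reduction --- is what drives the induction in the Uniform Center Theorem \ref{the:constantcenters}, from which (b) follows. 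The one thing you leave implicit that the paper makes precise is the induction carried along each diagonal of the uniform center (that all triangles $\langle r,d\rangle$ with $s+d\le r\le n-2s$ share a type), which is what allows the count to close at the threshold $n\ge 4s$, i.e.\ $s\le\lfloor\frac{n+1}{4}\rfloor$.

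The genuine gap is in (c) and (d), and you correctly identify it yourself: you have no invariant controlling the sign of $(\text{value}-1)$ through the nonlinear $Y$--$\Delta$ map, and without one the claimed strict inequalities are unproven. The paper does not close this gap either; its only progress is Lemma \ref{lem:2leftbdy} and Corollary \ref{cor:specialcase1}, which treat the single family $L_s=\langle s,1,1\rangle_{n,n-s}$ by rewriting the boundary recurrence $L_{s+1}=\frac{2L_sB_s}{L_s+2B_s}$ as $L_{s+1}L_s=2B_s(L_s-L_{s+1})$, whence $(L_s)$ is monotone decreasing from $L_1=\frac{2}{3}$ and stays strictly below one. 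This is precisely the one-dimensional seed of the rim-indexed recurrence you propose, so your plan points in the same direction as the paper's partial result, but neither you nor the paper supplies the monotonicity or sign-preservation statement for interior rim edges, for the three corners, or for the ``strictly greater than one'' half of (c), and the exactness of the vanishing threshold in (d) is likewise untouched; those parts of the statement remain conjectural.
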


We have numerically verified this conjecture on all $n$-grids, $1 \le n \le 250.$ Conjecture \ref{con:main}(a) is true by assumption and was included for purposes of completeness. Conjecture \ref{con:main}(b) is proven at Corollary \ref{cor:provenlater}.    

However, despite the clear evidence and simplicity of the statement, the proof of the remaining parts of Conjecture \ref{con:main} remains elusive.   A special cases of (c) is proven at Corollary \ref{cor:specialcase}.   
 
\section{Proof Methods.}\label{sec:proofmethods}

All results in this paper are proven by applying Algorithm \ref{alg:supertri} which in turn involves computing series,  $\Delta$--Y, and Y--$\Delta$ transformations.  We will follow the technique used by Hendel \cite{Hendel} who approached each proof using the five steps (panels) A-E presented in Figure \ref{fig:5panels}. In other words, each proof will consist of (Step A) identification of the triangles and edge values in some $n$-grid used in the computation, (Steps B, C) application of the $\Delta-Y$ transforms and ignoring any tails, (Step D) performing any relevant series computations (if border edges are involved), and (Step E) performing $Y-\Delta$ transformations resulting in the edge value of a triangle in the reduced $(n-1)$-grid. The proofs are typically summarized in a figure sequentially showing Steps A-E.

Unlike \cite{Hendel} where a typical proof applied to all edges of a triangle, proofs in this paper will very often focus on specific edges.   There are only 4 cases that have to be considered  to develop a comprehensive suite of lemmas that cover all needed proofs. These four cases are presented in this section. Each lemma is given a mnemonical name to facilitate reference later in the paper.

\begin{lemma}  \label{lem:4triangles}[Base Edge]
For given integers $n \ge 3, r \le n-2, 1 \le d,$ the edge-value of  
$T^{n,n-1}_{r,d,B}$
    is computed from the 9 edge-values  in the triangles $T^{n,n}_{r+1,d},\;T^{n,n}_{r+1,d}$ and $T^{n,n}_{r+2,d+1}$ (Figure ~\ref{fig:4triangles} presents a general case).
\end{lemma}
\begin{proof} Algorithm \ref{alg:supertri}. To clarify the proof, we note that the edges labeled a-j in Panel A of Figure \ref{fig:4triangles} identify the 9 edges of the parent grid needed to compute the target base edge of the child grid. This identification is in fact what is required by Step A of Algorithm~\ref{alg:supertri}. Then Steps B and C require  discarding tails and performing  $\Delta-Y$ transformations as   shown in Panels B and C. Finally, panels D and E illustrate the required computations of Steps D and E, the computation of $Y-\Delta$ transformations. The function derived is all that is needed to perform the computations in future sections. 
\end{proof}

Notice that the base edge is a function of 9 edge values as indicated in Figure \ref{fig:4triangles}. The following notation will be used to summarize the relationship.
\begin{equation}\label{equ:Fbaseedge}
T^{n,n-1}_{r,d,B} = F(T^{n,n}_{r+1,d}, T^{n,n}_{r+1,d+1},
T^{n,n}_{r+2,d+1}),
\end{equation}
where $F$ indicates some function, and each triangle argument is understood to represent the 3 edge resistances of that triangle (say in clockwise order starting with the left edge). While the notation is primarily  mnemonical, it is compact and very useful in summarizing functional dependencies in proofs.

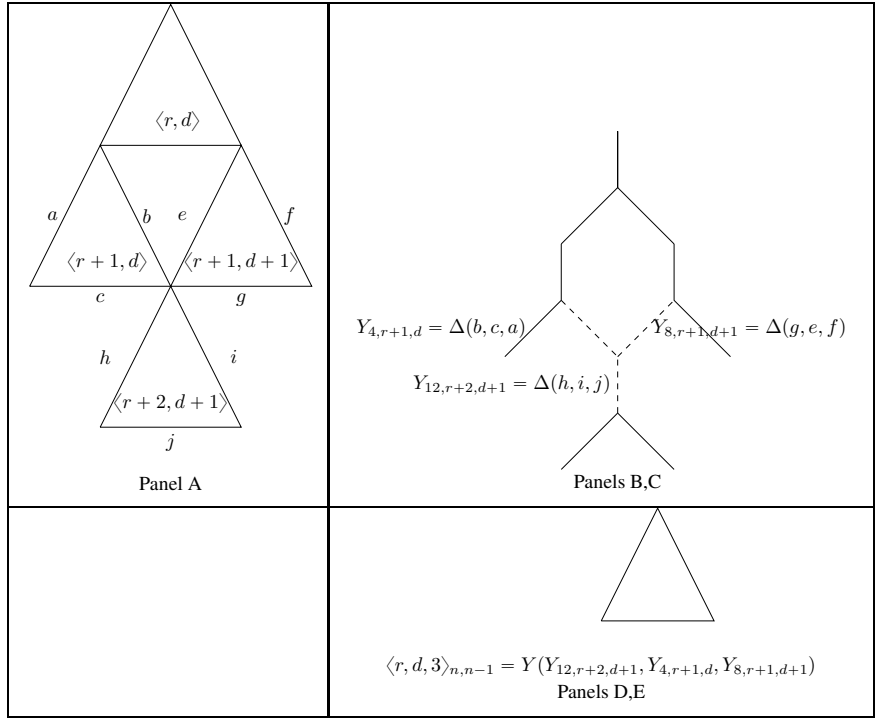
\begin{figure}[ht]
\begin{center}
\begin{tabular} {|c|c|}
\hline
\scalebox{.75}{
\begin{tikzpicture}[xscale=1.25, yscale=1.25]

\draw  (1,0)--(2,2)--(3,0)--(1,0);
\draw   (0,2)--(1,4)--(2,2)--(0,2);
\draw   (2,2)--(3,4)--(4,2)--(2,2);
\draw   (1,4)--(2,6)--(3,4)--(1,4);

\node [left] at (1.25,1) {$h$};
\node [right] at (2.75,1) {$i$};
\node [below] at (2,0) {$j$};

\node [above] at (2.0,.1) {$\langle r+2,d+1 \rangle$};
\node [above] at (1.1,2.1) {$\langle r+1,d \rangle$};
\node [above] at (3.0,2.1) {$\langle r+1,d+1 \rangle$};
\node [above] at (2.1,4.1) {$\langle r,d \rangle$};

\node [left] at (.5,3) {$a$};
\node [below] at (1,2) {$c$};
\node [right] at (1.5 ,3) {$b$};

\node [right] at (2 ,3) {$e$};
\node [right] at (3.5,3)  {$f$};
\node [below] at (3,2) {$g$};

\node [above] at (2,-1) {Panel A};
\end{tikzpicture}}
&

\scalebox{.75}{
\begin{tikzpicture} [xscale=1,yscale=1]
\draw (1,0)--(2,1);
\draw [dashed] (2,1)--(2,2);
\draw (2,1)--(3,0);
\draw (0,2)--(1,3)--(1,4);
\draw [dashed] (1,3)--(2,2);
\draw [dashed]  (2,2)--(3,3);
\draw (3,4)--(3,3)--(4,2);
\draw  (1,4)--(2,5)--(2,6)--(2,5)--(3,4);

 \node [left] at (2,1.5)  
{$Y_{12,r+2,d+1}=\Delta(h,i,j)$};
\node [left] at (.5,2.5) {$Y_{4,r+1,d}=\Delta(b,c,a) $};
\node [right] at (2.5,2.5) {$Y_{8,r+1,d+1} = \Delta(g,e,f)$};

\node [above] at (2,-.5) {Panels B,C};
 \end{tikzpicture}
 }

\\
\hline &

\scalebox{.75}{
\begin{tikzpicture} 
\draw (0,0)--(1,2)--(2,0)--(0,0);

\node [below] at (0,-0.5)  
{$
\langle r,d,3 \rangle_{n,n-1} = Y(Y_{12,r+2,d+1},Y_{4,r+1,d},Y_{8,r+1,d+1})$};

\node [below] at (0,-1) {Panels D,E};

\end{tikzpicture}
}
\\
\hline
\end{tabular}
\end{center}
\caption{
(Base Edge Lemma) Panels A-E for calculation of the base edge value used in Lemma \ref{lem:4triangles}. In Panel A, $a-i$ are edge values with row and diagonal coordinates of each triangle  indicated  in the triangle's interior. The functions $Y$ and $\Delta$ are defined in Definitions \ref{def:dy}-\ref{def:yd}. For typographical reasons, here and throughout other figures presented in this paper,  angle bracket notation is used to indicate specific triangles, i..e.
$\langle r, d \rangle = T_{r,d}.$ The notation for $Y$ legs was provided after the definition of Definition 2.7.
}\label{fig:4triangles}
\end{figure}

\begin{lemma} \label{lem:2triangles}[Boundary Edge]
For given integers $n \ge 2, r \le n-1,$ the edge value of 
$T^{n,n-1}_{r,1,L}$
 is computed from the 6 edge-values in the triangles $T^{n,n}_{r,1}$ and $T^{n,n}_{r+1,1}$ (Figure \ref{fig:2triangles} presents a general case).
\end{lemma}
\begin{proof} Algorithm \ref{alg:supertri} 
\end{proof}

The associated functional equation is,
\begin{equation}\label{equ:Fboundaryedge}
T^{n,n-1}_{r,1,L} = F(T^{n,n}_{r,1}, T^{n,n}_{r+1,1}).
\end{equation}

\begin{figure}[ht!]
\begin{center}
\begin{tabular} {|c|c|}
\hline
\scalebox{.75}{

\begin{tikzpicture}[xscale=1,yscale=1]

\draw   (3,2)--(4,4)--(5,2)--(3,2);
\draw    (4,4)--(5,6)--(6,4)--(4,4);

\node [above] at (5,4) {$\langle r,1 \rangle$};
\node [above] at (4,2) {$\langle r+1,1 \rangle$};

\node [left] at (4,5) {$a$};
\node [left] at (3.25,3) {$e$};

\node [right] at (5.5,5)
{$b$};  
\node [right] at (4.5,3)
{$f$};
 
\node [below,right] at (4,1.75) 
{$g$};
\node [below,right] at (5,3.75) 
{$c$};

\node [below] at (4,1.5) {Panel A};

\end{tikzpicture}
}
&
\scalebox{.75}{
\begin{tikzpicture}[xscale=1, yscale=1]
 
\draw  (1,2)--(2,3)--(3,2);
\draw [dashed] (2,3)--(2,4);
\draw [dashed] 
(2,4)--(3,5);
\draw (4,4)--(3,5)--(3,6);

\node [right] at (0,3.5) 
{$Y_{12,r+1,1}=   
\Delta(e,f,g)$};
\node [right] at (0,4.5) 
{$Y_{8,r,1}= 
\Delta(c,a,b)$};

\node [below] at (6,3) {Panels B, C};
\end{tikzpicture}
}
\\
\hline
\qquad
&  
\scalebox{.75}{
\begin{tikzpicture}[xscale=1,yscale=1]
 
\draw (3,4)--(4,6)--(5,4)--(3,4);

\node [above] at (4.1,4) {$\langle r,1 \rangle_{n,n-1}$};

\node [left] at (3,5)
{$\langle r,1,1 \rangle_{n,n-1}=Y_{8,r,1}+Y_{12,r+1,1} $};
 
\node [below] at (2,4.5) {Panel D};

\end{tikzpicture}
}
\\
\hline

\end{tabular}
\end{center}
\caption{(Boundary Edge Lemma) Panels A-D for calculation of the left side edge value on the boundary of the $n$-grid  used in Lemma \ref{lem:2triangles}.} 
\label{fig:2triangles}
\end{figure}
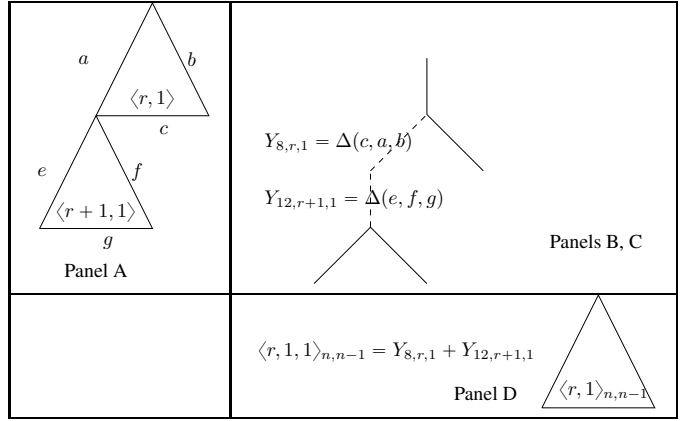

\begin{lemma} \label{lem:3trianglesleft}[Left Edge]
For given integers $n \ge 2, r \le n-1, d \ge 2$, the edge-value of 
$T^{n,n-1}_{r,d,L}$
 is computed from the 9 edge-values in the triangles $T^{n,n}_{r,d-1,}, T^{n,n}_{r,d}$ and, $ T^{n,n}_{r+1,d}$ (Figure \ref{fig:3trianglesleft} presents a general case).
\end{lemma}

\begin{proof} Algorithm \ref{alg:supertri} 
\end{proof}
The associated functional equation is
\begin{equation}\label{equ:Fleftedge}
T^{n,n-1}_{r,d,L} = F(T^{n,n}_{r,d-1,}, T^{n,n}_{r,d}, T^{n,n}_{r+1,d})
\end{equation}

\begin{figure}[ht!]
\begin{center}
\begin{tabular} {|c|}
\hline
\scalebox{.75}{
\begin{tikzpicture}[xscale=1,yscale=1]

\draw  (2,4)--(3,6)--(4,4)--(2,4);
\draw   (4,4)--(5,6)--(6,4)--(4,4);
\draw   (3,2)--(4,4)--(5,2)--(3,2);

\node  at (3,4.25) {$\langle r, d-1 \rangle$};
\node  at (5,4.25) {$\langle r, d \rangle$};
\node  at (4,2.25) {$\langle r+1, d\rangle$};

\node [right] at (2,5.5) {$a$};
\node [right] at (3.5,5.5) {$b$};
\node [below] at (3,4) {$c$};

\node [left] at (4.5,5) {$e$};
\node [right] at (5.5,5) {$f$};
\node [below] at (5.5,4) {$g$};

\node [left] at (3.5,3) {$h$};
\node [right] at (4.5,3) {$i$};
\node [below] at (4,2) {$j$};

\node [below] at (3.5,1.5) {Panel A};
\end{tikzpicture}
}
\\

\hline
\scalebox{.75}{

\begin{tikzpicture}[xscale=1,yscale=1]

\draw  (4.5,4)--(5.5,5)--(5.5,6);
\draw [blue] (5.5,5)--(6.5,4);
\draw  (7.5,6)--(7.5,5)--(8.5,4);
\draw [red] (6.5,4)--(7.5,5);
\draw   (5.5,2)--(6.5,3)--(7.5,2);
\draw [dashed](6.5,4)--(6.5,3);

\node [right, blue] at (2.5,4.5) 
{$Y_{4,r,d-1}= \Delta(b,c,a)$};
\node [right,red] at (8.5,4.5) 
{$Y_{8,r,d}= \Delta(g,e,f) $};
\node [right] at (4.5,3.5) 
{$Y_{12,r+1,d}=\Delta(h,i,j)$};

\node [below] at (5,1.5) {Panel B, C};
 
\end{tikzpicture}
}

\\
 \hline
\scalebox{.75}{
\begin{tikzpicture}[xscale=1,yscale=1]

\draw  (1,2)--(4,6)--(7,2)--(1,2);
\node [right] at (1,4) {$L=Y(Y_{4,r,d-1}, Y_{8,r,d}, Y_{12,r+1,d})$};
 
\node [above] at (4,2) {$\langle r,d \rangle_{n,n-1}$};

\node [below] at (4,1.5) {Panel D-E};

\end{tikzpicture}
}
\\  
\hline
\end{tabular}
\caption{(Left Edge Lemma) Panels A-E for calculation of  the left edge value used in Lemma \ref{lem:3trianglesleft}. To increase readability, the labels for the 8 O'clock and 4 O'clock legs are color coded identically with their labels.}\label{fig:3trianglesleft}
\end{center}
\end{figure}
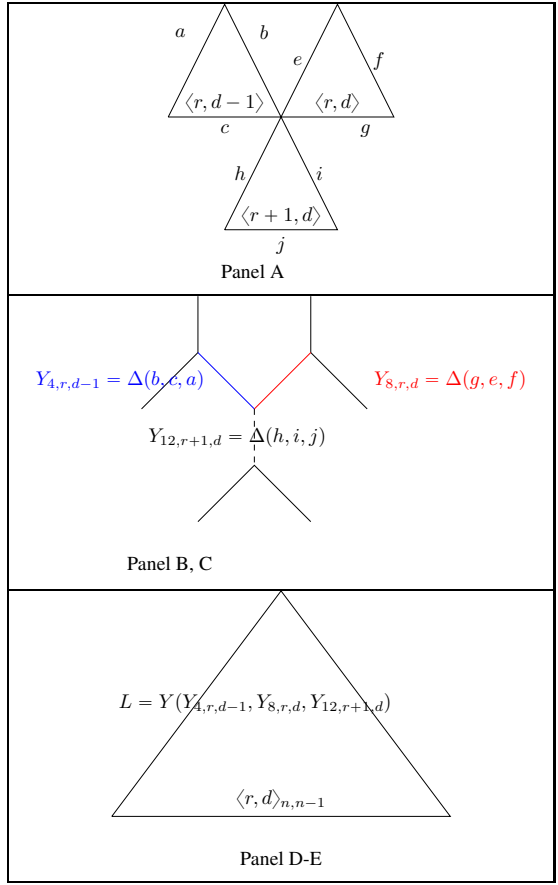

\begin{lemma} \label{lem:3trianglesright}[Right Edge]
Given integers $n \ge 2, r \le n-1, d \le r-1$ to compute 
$T^{n,n-1}_{r,d,R}$
it suffices to use the triangles, edge values, and functions presented in Figure \ref{fig:3trianglesright}.
\end{lemma}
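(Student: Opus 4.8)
The plan is to prove this exactly as its three predecessors, namely by a direct application of Algorithm \ref{alg:supertri}, while exploiting that the right edge $\langle r,d,2\rangle$ is the left--right mirror image of the left edge treated in Lemma \ref{lem:3trianglesleft}. First I would carry out Step A: identify the parent triangles whose legs feed into the target edge. Reflecting the Left Edge configuration across the vertical axis through $\langle r,d\rangle$, these should be the right neighbor $\langle r,d+1\rangle$, the triangle $\langle r,d\rangle$ itself, and the lower-right triangle $\langle r+1,d+1\rangle$. The hypothesis $d\le r-1$ is precisely what guarantees that $\langle r,d+1\rangle$ (and hence $\langle r+1,d+1\rangle$) exists and, simultaneously, that the target $2$-edge is interior rather than on the right boundary of the grid --- mirroring the role of $d\ge 2$ in Lemma \ref{lem:3trianglesleft}.

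Next I would run Steps B--E. Steps B and C apply a $\Delta$--$Y$ transform to each of the three triangles and discard the tails, producing the three $Y$-legs $Y_{8,r,d+1}$, $Y_{4,r,d}$, and $Y_{12,r+1,d+1}$ of Notation \ref{CYlegreducedgrid}, each a single evaluation of $\Delta$ on the appropriate ordered triple of parent edge labels. Because $d\le r-1$ keeps the target edge off the boundary, Step D contributes no series transformation, and Step E is a single $Y$--$\Delta$ transform combining the three legs. I therefore expect Figure \ref{fig:3trianglesright} to record the formula
\[
\langle r,d,2\rangle_{n,n-1} \;=\; Y\!\left(Y_{8,r,d+1},\,Y_{4,r,d},\,Y_{12,r+1,d+1}\right),
\]
with the $8$-o'clock leg appearing first because, by the convention in Definition \ref{def:yd}, the first argument of $Y$ is the leg perpendicular to the edge being formed (the $1$-edge pairs with a $4$-o'clock leg and the $3$-edge with a $12$-o'clock leg, so the $2$-edge pairs with an $8$-o'clock leg).

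A conceptual shortcut, which I would use to double-check the above, is to invoke the vertical symmetry of Definition \ref{def:gridsymmetry}: since $T_n$ and every grid obtained from it by reduction is symmetric, $\langle r,d,2\rangle_{n,n-1}=\langle r,r+1-d,1\rangle_{n,n-1}$, and the right-hand side is computed by Lemma \ref{lem:3trianglesleft} applied with $d'=r+1-d$ (note $d\le r-1 \iff d'\ge 2$); re-expressing the resulting legs back in the native diagonals via the same symmetry reproduces the displayed formula. The main obstacle is not any substantive computation but bookkeeping: reflecting each triangle and each clock-position leg correctly, and keeping the $\Delta$ and $Y$ argument orders consistent with the orientation conventions of Definitions \ref{def:dy} and \ref{def:yd}. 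Once the Left Edge picture is reflected faithfully, the argument is a verbatim transcription of Algorithm \ref{alg:supertri}.
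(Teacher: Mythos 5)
Your proposal is correct and matches the paper's (one-line) proof, which simply invokes Algorithm \ref{alg:supertri} and delegates the details to Figure \ref{fig:3trianglesright}: you identify the same three parent triangles $\langle r,d\rangle$, $\langle r,d+1\rangle$, $\langle r+1,d+1\rangle$, the same three legs $Y_{4,r,d}$, $Y_{8,r,d+1}$, $Y_{12,r+1,d+1}$, and the same final $Y$--$\Delta$ combination with the $8$-o'clock leg first. Your ordering $Y(Y_{8,r,d+1},Y_{4,r,d},Y_{12,r+1,d+1})$ differs from the figure's $Y(Y_{8,r,d+1},Y_{12,r+1,d+1},Y_{4,r,d})$ only in the last two arguments, in which $Y$ is symmetric, so the formulas agree.
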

\begin{proof} Algorithm \ref{alg:supertri} 
\end{proof}
The associated functional equation is
\begin{equation}\label{equ:Frightedge}
T^{n,n-1}_{r,d,R} = F(T^{n,n}_{r,d}, T^{n,n}_{r+1,d},T^{n,n}_{r+1,d+1}).
\end{equation}

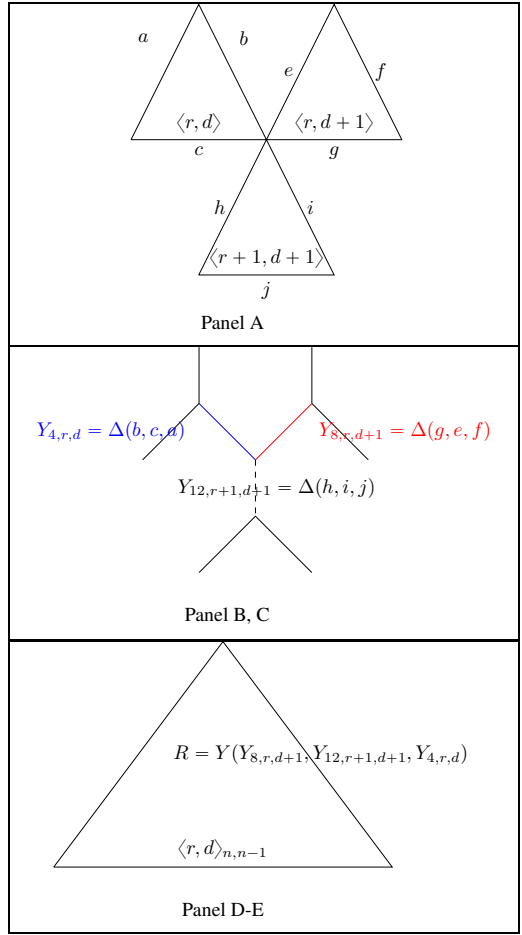
\begin{figure}[ht!]
\begin{center}
\begin{tabular} {|c|}
\hline
\scalebox{.75}{
\begin{tikzpicture}[xscale=1.2,yscale=1.2]

\draw  (2,4)--(3,6)--(4,4)--(2,4);
\draw   (4,4)--(5,6)--(6,4)--(4,4);
\draw   (3,2)--(4,4)--(5,2)--(3,2);

\node  at (3,4.25) {$\langle r, d \rangle$};
\node  at (5,4.25) {$\langle r, d+1  \rangle$};
\node  at (4,2.25) {$\langle r+1, d+1 \rangle$};

\node [right] at (2,5.5) {$a$};
\node [right] at (3.5,5.5) {$b$};
\node [below] at (3,4) {$c$};

\node [left] at (4.5,5) {$e$};
\node [right] at (5.5,5) {$f$};
\node [below] at (5,4) {$g$};

\node [left] at (3.5,3) {$h$};
\node [right] at (4.5,3) {$i$};
\node [below] at (4,2) {$j$};

\node [below] at (3.5,1.5) {Panel A};
\end{tikzpicture}}
\\

\hline
\scalebox{.75}{
\begin{tikzpicture}[xscale=1,yscale=1]

\draw  (3.5,4)--(4.5,5)--(4.5,6);
\draw [blue] (4.5,5)--(5.5,4);
\draw  (6.5,6)--(6.5,5)--(7.5,4);
\draw [red] (5.5,4)--(6.5,5);
\draw   (4.5,2)--(5.5,3)--(6.5,2);
\draw [dashed](5.5,4)--(5.5,3);

\node [right, blue] at (1.5,4.5) 
{$Y_{4,r,d}= \Delta(b,c,a)$};
\node [right,red] at (6.5,4.5) 
{$Y_{8,r,d+1}= \Delta(g,e,f) $}; 
\node [right] at (4,3.5) 
{$Y_{12,r+1,d+1}=\Delta(h,i,j)$};

\node [below] at (5,1.5) {Panel B, C};
 
\end{tikzpicture}
}
\\
 \hline
\scalebox{.75}{
\begin{tikzpicture}[xscale=1,yscale=1]

\draw  (1,2)--(4,6)--(7,2)--(1,2);
\node [right] at (3,4) {$R=Y(Y_{8,r,d+1}, Y_{12,r+1,d+1}, Y_{4,r,d})$};
 
\node [above] at (4,2) {$\langle r,d \rangle_{n,n-1}$};

\node [below] at (4,1.5) {Panel D-E};

\end{tikzpicture}

}
\\  
\hline
\end{tabular}
\caption{(Right Edge Lemma) Panels A-E for calculation of an right edge resistance used in Lemma \ref{lem:3trianglesright}}\label{fig:3trianglesright}
\end{center}
\end{figure}

Equations \eqref{equ:Fbaseedge} - \eqref{equ:Frightedge} imply the heuristic that the edge values of the triangle $T^{n,n-(i+1)}_{r,d},\; d \neq 1$  
is a function of triangles in the parent grid of the form $T^{n,n-i}_{r+u,r+v}, u \in \{0,1,2\}, 
v \in \{-1,0,1\}.$ This immediately yields some interesting results presented in the next section.

\section{Illustrations and Easy Consequences of the Proof Methods}\label{sec:illandconseq}

This section begins by presenting two straightforward consequences of the proof methods of Section \ref{sec:proofmethods}. We first present a corollary illustrating how the figures associated with the lemmas in Section \ref{sec:proofmethods} naturally provide explicit computations sufficient to calculate all edge values in a single reduction.

\begin{corollary}\label{cor:tnnminus1istwothirds}
Let $n \ge 2.$ Then $\partial^2 T^{n,n-1} = \frac{2}{3},$ and 
$Int(T^{n,n-1})=1.$ In words, the edge boundary of $T^{n,n-1}$ is identically $\frac{2}{3}$ while all other edges are 1.
\end{corollary}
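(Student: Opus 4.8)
The plan is to apply the four computational lemmas of Section~\ref{sec:proofmethods} to the single reduction $T_n = T_{n,n} \to T_{n,n-1}$, exploiting that every input edge value in the parent grid $T_n$ equals $1$. First I would classify the edges of the $(n-1)$-grid $T_{n,n-1}$ by their type $\langle r,d,e\rangle$ and location: the edge boundary $\partial^2 T_{n,n-1}$ splits into the left side (the $1$-edges of $\langle r,1\rangle$), the right side (the $2$-edges of $\langle r,r\rangle$), and the base (the $3$-edges of the bottom row $\langle n-1,d\rangle$); the interior consists of the remaining $1$-edges ($d\ge 2$), $2$-edges ($d\le r-1$), and $3$-edges ($r\le n-2$). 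These six families match precisely the hypotheses of Lemmas~\ref{lem:2triangles},~\ref{lem:3trianglesleft},~\ref{lem:3trianglesright}, and~\ref{lem:4triangles}.

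Next I would carry out the evaluations, which are immediate once all parent edges equal $1$. Every $\Delta$-leg produced in Step~B is
\[ \Delta(1,1,1) = \frac{1\cdot 1}{1+1+1} = \frac{1}{3}. \]
For an interior edge the final Step~E applies the $Y$ function to three such legs, giving
\[ Y\!\left(\tfrac{1}{3},\tfrac{1}{3},\tfrac{1}{3}\right) = \frac{3\cdot\frac{1}{9}}{\frac{1}{3}} = 1, \]
which by Lemmas~\ref{lem:4triangles},~\ref{lem:3trianglesleft}, and~\ref{lem:3trianglesright} settles the interior base, left, and right edges respectively. For a left-boundary edge, Lemma~\ref{lem:2triangles} replaces Step~E by a series combination of two legs, yielding $\tfrac{1}{3}+\tfrac{1}{3}=\tfrac{2}{3}$.

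The remaining point, and the only real subtlety, is that the right-side edges ($d=r$) and the bottom-row base edges ($r=n-1$) fall outside the hypotheses of Lemmas~\ref{lem:3trianglesright} and~\ref{lem:4triangles} respectively, so they are not computed directly. Here I would invoke the isotropy of $T_{n,n-1}$: since $T_n$ is symmetric and a reduction preserves grid symmetry (Notation~\ref{CYlegreducedgrid}), the vertical and rotational relations of Definition~\ref{def:gridsymmetry} hold in $T_{n,n-1}$. Vertical symmetry gives $\langle r,r,2\rangle = \langle r,1,1\rangle = \tfrac{2}{3}$ on the right boundary, and rotational symmetry (with $n$ replaced by $n-1$, so that $\langle n-1,d,3\rangle = \langle d,1,1\rangle$) gives $\tfrac{2}{3}$ on the bottom base. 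Combined with the direct computations, this shows every edge of $\partial^2 T_{n,n-1}$ equals $\tfrac{2}{3}$ and every interior edge equals $1$. The main obstacle is therefore not any hard calculation but the bookkeeping that these six families exhaust all edges and that symmetry correctly supplies the two families the lemmas leave uncomputed; checking the small cases $n=2,3$ guards against off-by-one errors in the index ranges.
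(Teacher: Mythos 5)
Your proposal is correct and follows essentially the same route as the paper's own (lemma-based) proof: every $\Delta$-leg is $\Delta(1,1,1)=\tfrac{1}{3}$, the series step on the boundary gives $\tfrac{1}{3}+\tfrac{1}{3}=\tfrac{2}{3}$, and $Y\left(\tfrac{1}{3},\tfrac{1}{3},\tfrac{1}{3}\right)=1$ in the interior. The only difference is cosmetic: where you invoke vertical and rotational symmetry to supply the right-side and bottom-row edges that fall outside the hypotheses of the Right and Base Edge Lemmas, the paper's first proof simply observes that \emph{every} boundary edge of $T_{n,n-1}$ arises from a series combination of two $\tfrac{1}{3}$-legs; both are valid, and your bookkeeping of the uncovered families is arguably the more careful of the two.
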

\begin{proof} To acclimate the reader to the proof methods of Section \ref{sec:proofmethods} we present two proofs.

First, we prove this result by appealing directly to the reduction algorithm and calculations. By Algorithm~\ref{alg:supertri}, using a $\Delta-Y$ transformation, we start the proof by converting the $n$ rows of $T^n$ to $n$ rows of $Y$s. 

Since $\Delta(1,1,1)=\frac{1}{3},$ all $Y$-edges have weight
$\frac{1}{3}$.  By Algorithm~\ref{alg:supertri} there are now two cases to consider. Along the boundary, the edges of $T^{n,n-1}$ arise from a series transformation and hence have weight $\frac{1}{3}+\frac{1}{3}=\frac{2}{3}.$  All remaining edges in $T^{n,n-1}$  arise from $Y-\Delta$ transformations and hence have 
weight 
$Y(\frac{1}{3},\frac{1}{3},\frac{1}{3}) = 1.$

The second proof reflects the fact that the algorithmic descriptions have already been provided in the figures accompanying the proofs of Section \ref{sec:proofmethods}. First consider the left-boundary edges of $T^{n,n-1}.$  By the Boundary Edge Lemma,  
$\langle r,1, 1 \rangle_{n,n-1}
= \Delta(\frac{1}{3}, \frac{1}{3}, \frac{1}{3}) + \Delta(\frac{1}{3}, \frac{1}{3}, \frac{1}{3}) = \frac{2}{3}$. For the remaining edges of $T^{n,n-1}$ by the Left, Right, and Base Edge Lemmas and accompanying figures, the computation of the edge values arises from a $Y$ function applied to three arguments each of which is a $\Delta$ function whose three arguments are identically one. The proof is therefore completed by noting that $\Delta(1,1,1)=\frac{1}{3}$ and
$Y(\frac{1}{3},\frac{1}{3},\frac{1}{3})=1.$
\end{proof}

\begin{remark} In the sequel, we will suffice with proofs based on the appropriate lemmas and accompanying figures of Section \ref{sec:proofmethods}.   
\end{remark}

\begin{lemma}\label{lem:interior} For given integers $n \geq 3$ and $ 1 \leq s \leq \lfloor n/4 \rfloor$,  we have \begin{enumerate}
\item[(i)] $Int(T^{n,n-s,s})=1$, 
\item[(ii)] $\partial^2 T^{n,n-s,s}=c(s),$ where
$c(s)$ is some constant only dependent on $s$.
\end{enumerate}

\end{lemma}
\begin{proof}
	Before beginning the proof  we first define $T^p=T^{n,n-s,s}$ and $T^c = T^{n,n-s-1,s+1}$ where $p$ and $c$ stand for parent and child grid. The next statement, needed in the formal statement of the induction assumption, follows from the definition of subgrid (Definition~\ref{def:ssubgrid}) and the lemmas in Section~\ref{sec:proofmethods}.\\
 
(iii)  For edge $e  \in T^c (Int(T^c))$ the edge values used as arguments to compute $e$ all belong to $T^p(Int(T^p)).$\\
  	
With these preliminaries we now present the proof of the lemma. The proof is  by induction on $s$.  The base case, $s=1$  is given by Corollary~\ref{cor:tnnminus1istwothirds}. We explicitly note that (i)-(iii) hold for this case.

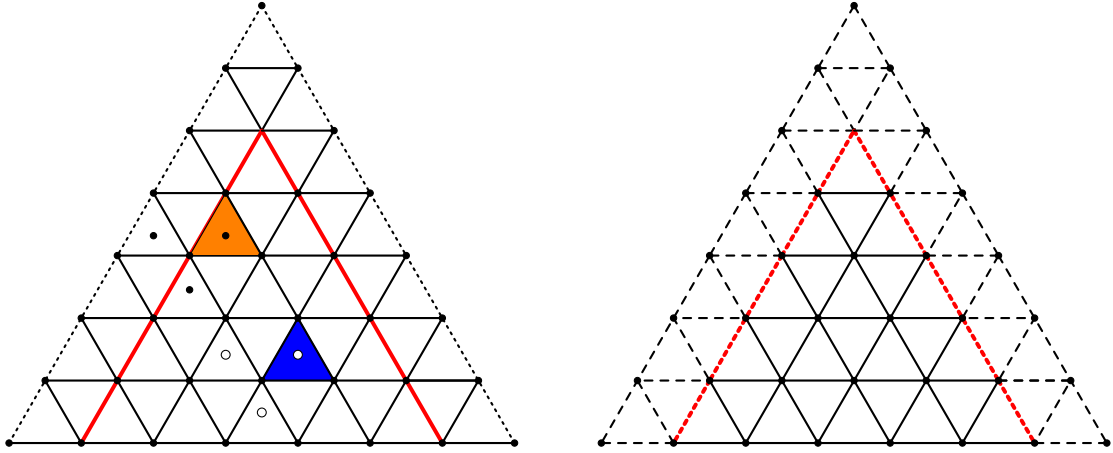
\begin{figure}[ht!]
\begin{center}

\begin{tikzpicture}[scale = .6, line cap=round,line join=round,>=triangle 45,x=1.0cm,y=1.0cm,scale=.8]

\draw [line width=.8pt, dotted, color=black] (-3.,10.577350269189628)--  (-4.,8.84529946162075);

\draw [line width=.8pt, dotted, color=black] (-4.,8.84529946162075)--  (-5.,7.113248654051867);

\draw [line width=.8pt, dotted, color=black] (-5.,7.113248654051867)--  (-6.,5.381197846482991);

\draw [line width=.8pt, dotted, color=black] (-6.,5.381197846482991)-- (-7.,3.6491470389141107);

\draw [line width=.8pt, dotted, color=black] (-7.,3.6491470389141107)--(-8.,1.9170962313452288)--(-9.,0.1850454237763482)-- (-10.,-1.5470053837925324);

\draw [line width=.8pt, dotted, color=black] (-3.,10.577350269189628)-- (-2.,8.845299461620748);

\draw [line width=.8pt, dotted, color=black] (-2.,8.845299461620748)--  (-1.,7.113248654051868);

\draw [line width=.8pt, dotted, color=black] (-1.,7.113248654051868)-- (0.,5.38119784648299);

\draw [line width=.8pt, dotted, color=black] (0.,5.38119784648299)-- (1.,3.6491470389141094);
\draw [line width=.8pt, dotted, color=black] (1.,3.6491470389141094)-- (2.,1.9170962313452288)--(3.,0.1850454237763482) --(4.,-1.5470053837925324);
\draw [line width=.8pt,  color=black] (4.,-1.5470053837925324)-- (2.,-1.5470053837925324)-- (0.,-1.5470053837925324)-- (-2.,-1.5470053837925324)-- (-4.,-1.5470053837925324)-- (-6.,-1.5470053837925324)-- (-8.,-1.5470053837925324)-- (-10.,-1.5470053837925324);
\draw [line width=.8pt,color=black] (1.,3.6491470389141094)-- (-1.,3.6491470389141134);
\draw [line width=.8pt, color=black] (-3.,3.6491470389141174)-- (-1.,3.6491470389141134);
\draw [line width=.8pt, color=black] (-3.,3.6491470389141174)--  (-5.,3.6491470389141125);
\draw [line width=.8pt, color=black] (-5.,3.6491470389141125)-- (-7.,3.6491470389141107);

\draw [line width=.8pt,color=black] (-4.,8.84529946162075)--(-2.,8.845299461620748)-- (-3.,7.113248654051869)--(-4.,8.84529946162075);
\draw [line width=.8pt, color=black] (-3.,7.113248654051869)-- (-5.,7.113248654051867);
\draw [line width=.8pt, color=black] (-4.,5.381197846482992)-- (-5.,7.113248654051867);
\draw [line width=1.5pt,color=red]  (-4.,5.381197846482992)--(-3.,7.113248654051869);
\draw [line width=1.5pt,color=red] (-3.,7.113248654051869)--  (-2.,5.381197846482992);
\draw [line width=.8pt, color=black] (-1.,7.113248654051869)--  (-2.,5.381197846482992);
\draw [line width=.8pt, color=black] (-1.,7.113248654051868)--(-3.,7.113248654051869);
\draw [line width=.8pt, color=black] (-6.,5.381197846482991)--(-4.,5.381197846482992);
\draw [line width=.8pt, color=black](-5.,3.6491470389141125)--(-6.,5.381197846482991);
\draw [line width=1.5pt,color=red](-5.,3.6491470389141125)--(-4.,5.381197846482992);
\draw[fill=orange] (-5.,3.6491470389141125)--(-4.,5.381197846482992) -- (-3.,3.6491470389141125) -- cycle;

\draw [fill=black] (-4.,4.2) circle (2.5pt);
\draw [fill=black] (-6.,4.2) circle (2.5pt);
\draw [fill=black] (-5.,2.7) circle (2.5pt);

\draw[fill=blue] (-3.,0.1850454237763482)--(-2.,1.9170962313452288) -- (-1.,0.1850454237763482) -- cycle;

\draw [fill=white] (-2.,0.9) circle (3.5pt);
\draw [fill=white] (-4.,0.9) circle (3.5pt);
\draw [fill=white] (-3.,-.7) circle (3.5pt);

\draw [line width=.8pt, color=black] (-4.,5.381197846482992)--  (-2.,5.381197846482992);
\draw [line width=.8pt, color=black] (-3.,3.6491470389141174)--(-4.,5.381197846482992);
\draw [line width=.8pt, color=black] (-3.,3.6491470389141174)--(-2.,5.381197846482992);
\draw [line width=.8pt, color=black] (-2.,5.381197846482992)-- (0.,5.38119784648299)-- (-1.,3.6491470389141134);
\draw [line width=1.5pt,color=red] (-1.,3.6491470389141134)--(-2.,5.381197846482992);
\draw [line width=1.5pt,color=red] (-1.,3.6491470389141134)--(0.,1.9170962313452288)--(1.,0.1850454237763482);
\draw [line width=1.5pt,color=red] (-5.,3.6491470389141134)--(-6.,1.9170962313452288)--(-7.,0.1850454237763482);
\draw [line width=.8pt, color=black] (1.,3.6491470389141134)--(0.,1.9170962313452288)--(2.,1.9170962313452288)--(1.,0.1850454237763482);
\draw [line width=.8pt, color=black] (-7.,3.6491470389141134)--(-6.,1.9170962313452288)--(-8.,1.9170962313452288)--(-7.,0.1850454237763482);
\draw [line width=.8pt, color=black] (-1.,3.6491470389141134)--(-2.,1.9170962313452288)--(0.,1.9170962313452288)--(-1.,0.1850454237763482)--(-2.,1.9170962313452288)--(-3.,0.1850454237763482)--(-4.,1.9170962313452288)--(-5.,0.1850454237763482)--(-6.,1.9170962313452288)--(-4.,1.9170962313452288)--(-5.,3.6491470389141134);
\draw [line width=.8pt,color=black] (-3.,3.6491470389141174)--(-2.,1.9170962313452288)-- (-4.,1.9170962313452288)-- (-3.,3.6491470389141174);
\draw [line width=.8pt, color=black] (1.,0.1850454237763482)-- (-1.,0.1850454237763482)-- (-3.,0.1850454237763482)-- (-5.,0.1850454237763482) -- (-7.,0.1850454237763482);
\draw [line width=.8pt, color=black](1.,0.1850454237763482)--(3.,0.1850454237763482)-- 
(2.,-1.5470053837925324);
\draw [line width=1.5pt, color=red](1.,0.1850454237763482)--(2.,-1.5470053837925324);
\draw [line width=.8pt, color=black](1.,0.1850454237763482)--(3.,0.1850454237763482);
\draw [line width=.8pt, color=black](1.,0.1850454237763482)-- (0.,-1.5470053837925324)-- (-1.,0.1850454237763482)-- (-2.,-1.5470053837925324)-- (-3.,0.1850454237763482)-- (-4.,-1.5470053837925324)-- (-5.,0.1850454237763482) -- (-6.,-1.5470053837925324)-- (-7.,0.1850454237763482);
\draw [line width=.8pt, color=black](-8.,-1.5470053837925324)--(-9.,0.1850454237763482)-- (-7.,0.1850454237763482);
\draw [line width=1.5pt, color=red](-7.,0.1850454237763482)-- (-8.,-1.5470053837925324);%--
\begin{scriptsize}

\draw [fill=black] (-3.,10.577350269189628) circle (2.5pt);

\draw [fill=black] (-2.,5.381197846482992) circle (2.5pt);
 
\draw [fill=black] (-1.,7.113248654051868) circle (2.5pt);
 
\draw [fill=black] (-2.,8.845299461620748) circle (2.5pt);
 
\draw [fill=black] (-4.,8.84529946162075) circle (2.5pt);
 
\draw [fill=black] (-5.,7.113248654051867) circle (2.5pt);
 
\draw [fill=black] (-4.,5.381197846482992) circle (2.5pt);
 
\draw [fill=black] (-1.,3.6491470389141134) circle (2.5pt);
 
\draw [fill=black] (0.,5.38119784648299) circle (2.5pt);
 
\draw [fill=black] (1.,3.6491470389141094) circle (2.5pt);
 
\draw [fill=black] (-6.,5.381197846482991) circle (2.5pt);
 
\draw [fill=black] (-5.,3.6491470389141125) circle (2.5pt);
 
\draw [fill=black] (-7.,3.6491470389141107) circle (2.5pt);
 
\draw [fill=black] (-3.,3.6491470389141174) circle (2.5pt);

\draw [fill=black] (2.,1.9170962313452288) circle (2.5pt);

\draw [fill=black] (0.,1.9170962313452288) circle (2.5pt);

\draw [fill=black] (-2.,1.9170962313452288) circle (2.5pt);

\draw [fill=black] (-4.,1.9170962313452288) circle (2.5pt);

\draw [fill=black] (-6.,1.9170962313452288) circle (2.5pt);

\draw [fill=black] (-8.,1.9170962313452288) circle (2.5pt);

\draw [fill=black] (3.,0.1850454237763482) circle (2.5pt);

\draw [fill=black] (1.,0.1850454237763482) circle (2.5pt);

\draw [fill=black] (-1.,0.1850454237763482) circle (2.5pt);

\draw [fill=black] (-3.,0.1850454237763482) circle (2.5pt);

\draw [fill=black] (-5.,0.1850454237763482) circle (2.5pt);

\draw [fill=black] (-7.,0.1850454237763482) circle (2.5pt);

\draw [fill=black] (-9.,0.1850454237763482) circle (2.5pt);

\draw [fill=black] (4.,-1.5470053837925324) circle (2.5pt);

\draw [fill=black] (2.,-1.5470053837925324) circle (2.5pt);

\draw [fill=black] (0.,-1.5470053837925324) circle (2.5pt);

\draw [fill=black] (-2.,-1.5470053837925324) circle (2.5pt);

\draw [fill=black] (-4.,-1.5470053837925324) circle (2.5pt);

\draw [fill=black] (-6.,-1.5470053837925324) circle (2.5pt);

\draw [fill=black] (-8.,-1.5470053837925324) circle (2.5pt);

\draw [fill=black] (-10.,-1.5470053837925324) circle (2.5pt);
\end{scriptsize}

\end{tikzpicture}\quad\quad\quad
\begin{tikzpicture}[scale = .6, line cap=round,line join=round,>=triangle 45,x=1.0cm,y=1.0cm,scale=.8]

\draw [line width=.8pt, dashed, color=black] (-3.,10.577350269189628)--  (-4.,8.84529946162075);

\draw [line width=.8pt, dashed, color=black] (-4.,8.84529946162075)--  (-5.,7.113248654051867);

\draw [line width=.8pt, dashed, color=black] (-5.,7.113248654051867)--  (-6.,5.381197846482991);

\draw [line width=.8pt, dashed, color=black] (-6.,5.381197846482991)-- (-7.,3.6491470389141107);

\draw [line width=.8pt, dashed, color=black] (-7.,3.6491470389141107)--(-8.,1.9170962313452288)--(-9.,0.1850454237763482)-- (-10.,-1.5470053837925324);

\draw [line width=.8pt, dashed, color=black] (-3.,10.577350269189628)-- (-2.,8.845299461620748);

\draw [line width=.8pt, dashed, color=black] (-2.,8.845299461620748)--  (-1.,7.113248654051868);

\draw [line width=.8pt, dashed, color=black] (-1.,7.113248654051868)-- (0.,5.38119784648299);

\draw [line width=.8pt, dashed, color=black] (0.,5.38119784648299)-- (1.,3.6491470389141094);
\draw [line width=.8pt, dashed, color=black] (1.,3.6491470389141094)-- (2.,1.9170962313452288)--(3.,0.1850454237763482) --(4.,-1.5470053837925324);
%NN
\draw [line width=.8pt, dashed, color=black] (4.,-1.5470053837925324)-- (2.,-1.5470053837925324);
%NN
\draw [line width=.8pt, dashed, color=black] (-8.,-1.5470053837925324)-- (-10.,-1.5470053837925324);

\draw [line width=.8pt, color=black] (2.,-1.5470053837925324)-- (0.,-1.5470053837925324)-- (-2.,-1.5470053837925324)-- (-4.,-1.5470053837925324)-- (-6.,-1.5470053837925324)-- (-8.,-1.5470053837925324);

%NN
\draw [line width=.8pt,dashed,color=black] (1.,3.6491470389141094)-- (-1.,3.6491470389141134);
\draw [line width=.8pt,color=black] (-3.,3.6491470389141174)-- (-1.,3.6491470389141134);
\draw [line width=.8pt,color=black] (-3.,3.6491470389141174)--  (-5.,3.6491470389141125);
%NN
\draw [line width=.8pt,dashed,color=black] (-5.,3.6491470389141125)-- (-7.,3.6491470389141107);

%NN
\draw [line width=.8pt,dashed,color=black] (-4.,8.84529946162075)--(-2.,8.845299461620748)-- (-3.,7.113248654051869)--(-4.,8.84529946162075);
%NN
\draw [line width=.8pt, dashed,color=black] (-3.,7.113248654051869)-- (-5.,7.113248654051867);
%NN
\draw [line width=.8pt, dashed,color=black] 
 (-4.,5.381197846482992)-- (-5.,7.113248654051867);
\draw [line width=1.5pt,dotted,color=red]  (-4.,5.381197846482992)--(-3.,7.113248654051869);
\draw [line width=1.5pt,dotted,color=red] (-3.,7.113248654051869)--  (-2.,5.381197846482992);
%NN
\draw [line width=.8pt, dashed,color=black] 
 (-1.,7.113248654051869)--  (-2.,5.381197846482992);
%NN
\draw [line width=.8pt, dashed,color=black] 
(-3.,7.113248654051869)--  (-1.,7.113248654051868);
%NN
\draw [line width=.8pt, dashed,color=black] 
 (-6.,5.381197846482991)--(-4.,5.381197846482992);
%NN
\draw [line width=.8pt, dashed,color=black](-5.,3.6491470389141125)--(-6.,5.381197846482991);
\draw [line width=1.5pt,dotted,color=red](-5.,3.6491470389141125)--(-4.,5.381197846482992);
\draw [line width=.8pt, color=black] (-4.,5.381197846482992)--  (-2.,5.381197846482992);
\draw [line width=.8pt, color=black] (-3.,3.6491470389141174)--(-4.,5.381197846482992);
\draw [line width=.8pt, color=black] (-3.,3.6491470389141174)--(-2.,5.381197846482992);
%NN
\draw [line width=.8pt, dashed,color=black] (-2.,5.381197846482992)-- (0.,5.38119784648299)-- (-1.,3.6491470389141134);
\draw [line width=1.5pt,dotted,color=red] (-1.,3.6491470389141134)--(-2.,5.381197846482992);
\draw [line width=1.5pt,dotted,color=red] (-1.,3.6491470389141134)--(0.,1.9170962313452288)--(1.,0.1850454237763482);
\draw [line width=1.5pt,dotted,color=red] (-5.,3.6491470389141134)--(-6.,1.9170962313452288)--(-7.,0.1850454237763482);
%NN
\draw [line width=.8pt, dashed,color=black] 
 (1.,3.6491470389141134)--(0.,1.9170962313452288)--(2.,1.9170962313452288)--(1.,0.1850454237763482);
%NN
\draw [line width=.8pt, dashed,color=black] 
 (-7.,3.6491470389141134)--(-6.,1.9170962313452288)--(-8.,1.9170962313452288)--(-7.,0.1850454237763482);
\draw [line width=.8pt, color=black] (-1.,3.6491470389141134)--(-2.,1.9170962313452288)--(0.,1.9170962313452288)--(-1.,0.1850454237763482)--(-2.,1.9170962313452288)--(-3.,0.1850454237763482)--(-4.,1.9170962313452288)--(-5.,0.1850454237763482)--(-6.,1.9170962313452288)--(-4.,1.9170962313452288)--(-5.,3.6491470389141134);
\draw [line width=.8pt,color=black] (-3.,3.6491470389141174)--(-2.,1.9170962313452288)-- (-4.,1.9170962313452288)-- (-3.,3.6491470389141174);
\draw [line width=.8pt, color=black] (1.,0.1850454237763482)-- (-1.,0.1850454237763482)-- (-3.,0.1850454237763482)-- (-5.,0.1850454237763482) -- (-7.,0.1850454237763482);
%NN
\draw [line width=.8pt, dashed,color=black] 
(1.,0.1850454237763482)--(3.,0.1850454237763482)-- 
(2.,-1.5470053837925324);
\draw [line width=1.5pt, dotted,color=red](1.,0.1850454237763482)--(2.,-1.5470053837925324);
%NN
\draw [line width=.8pt, dashed,color=black] 
(1.,0.1850454237763482)--(3.,0.1850454237763482);
\draw [line width=.8pt, color=black](1.,0.1850454237763482)-- (0.,-1.5470053837925324)-- (-1.,0.1850454237763482)-- (-2.,-1.5470053837925324)-- (-3.,0.1850454237763482)-- (-4.,-1.5470053837925324)-- (-5.,0.1850454237763482) -- (-6.,-1.5470053837925324)-- (-7.,0.1850454237763482);
%NN
\draw [line width=.8pt, dashed,color=black](-8.,-1.5470053837925324)--(-9.,0.1850454237763482)-- (-7.,0.1850454237763482);
\draw [line width=1.5pt, dotted,color=red](-7.,0.1850454237763482)-- (-8.,-1.5470053837925324);%--
\begin{scriptsize}

\draw [fill=black] (-3.,10.577350269189628) circle (2.5pt);

\draw [fill=black] (-2.,5.381197846482992) circle (2.5pt);
 
\draw [fill=black] (-1.,7.113248654051868) circle (2.5pt);
 
\draw [fill=black] (-2.,8.845299461620748) circle (2.5pt);
 
\draw [fill=black] (-4.,8.84529946162075) circle (2.5pt);
 
\draw [fill=black] (-5.,7.113248654051867) circle (2.5pt);
 
\draw [fill=black] (-4.,5.381197846482992) circle (2.5pt);
 
\draw [fill=black] (-1.,3.6491470389141134) circle (2.5pt);
 
\draw [fill=black] (0.,5.38119784648299) circle (2.5pt);
 
\draw [fill=black] (1.,3.6491470389141094) circle (2.5pt);
 
\draw [fill=black] (-6.,5.381197846482991) circle (2.5pt);
 
\draw [fill=black] (-5.,3.6491470389141125) circle (2.5pt);
 
\draw [fill=black] (-7.,3.6491470389141107) circle (2.5pt);
 
\draw [fill=black] (-3.,3.6491470389141174) circle (2.5pt);

\draw [fill=black] (2.,1.9170962313452288) circle (2.5pt);

\draw [fill=black] (0.,1.9170962313452288) circle (2.5pt);

\draw [fill=black] (-2.,1.9170962313452288) circle (2.5pt);

\draw [fill=black] (-4.,1.9170962313452288) circle (2.5pt);

\draw [fill=black] (-6.,1.9170962313452288) circle (2.5pt);

\draw [fill=black] (-8.,1.9170962313452288) circle (2.5pt);

\draw [fill=black] (3.,0.1850454237763482) circle (2.5pt);

\draw [fill=black] (1.,0.1850454237763482) circle (2.5pt);

\draw [fill=black] (-1.,0.1850454237763482) circle (2.5pt);

\draw [fill=black] (-3.,0.1850454237763482) circle (2.5pt);

\draw [fill=black] (-5.,0.1850454237763482) circle (2.5pt);

\draw [fill=black] (-7.,0.1850454237763482) circle (2.5pt);

\draw [fill=black] (-9.,0.1850454237763482) circle (2.5pt);

\draw [fill=black] (4.,-1.5470053837925324) circle (2.5pt);

\draw [fill=black] (2.,-1.5470053837925324) circle (2.5pt);

\draw [fill=black] (0.,-1.5470053837925324) circle (2.5pt);

\draw [fill=black] (-2.,-1.5470053837925324) circle (2.5pt);

\draw [fill=black] (-4.,-1.5470053837925324) circle (2.5pt);

\draw [fill=black] (-6.,-1.5470053837925324) circle (2.5pt);

\draw [fill=black] (-8.,-1.5470053837925324) circle (2.5pt);

\draw [fill=black] (-10.,-1.5470053837925324) circle (2.5pt);

\end{scriptsize}

\end{tikzpicture}

\caption{Figure to assist with the proof of Lemma~\ref{lem:interior}.  In both panels the first seven rows of the $s$-subgrid is shown, and the boundary of the $s+1$ is shown in red.  The left panel shows the first seven rows of $T^{n,n-2,s}$ and the right panel shows the first seven rows of $T^{n,n-(s+1),s}$.  Dotted lines indicate those edges with values $c(s)$ (left) and $c(s+1)$ (right).  Solid lines have edge values equal to one.  Dashed lines have edge values not equal to one or $c(s+1)$ (they are not all equal). In the figure in the left panel a triangle on the boundary of the $s+1$-subgrid is highlighted in orange, with the parents of its left edge marked with filled circles.  A triangle in the interior is denoted by a blue triangle and the parent triangles of its left edge are marked with open circles.}
\label{fig:lemma53}
\end{center}
\end{figure}

Using an induction assumption, we now assume the lemma holds for $s$ and proceed to show that it holds for the case $s+1$.  We consider two cases of edges, according to whether the edges is in $Int(T^c)$ or $\partial^2(T^c)$. See Figure~\ref{fig:lemma53} for illustrative clarification. 
	
\textbf{Case of  Int($T_c$):}  If edge $e$ is in  $Int(T^c)$ then by (iii), the edge values in $T^p$ determining it belong to $Int(T^p).$ By (i) $Int(T^p)=1.$ Hence, by the functions associated with the lemmas of Section~\ref{sec:proofmethods}, the resistance value is one.

\textbf{Case of $\partial^2 T^c$:} Recall that by symmetry,  the edge values on the left side of $\partial^2 T^c$ are identical to the edge values on the right and base side. So it suffices to assume we are dealing with a left-edge $e \in \partial^2 T^c.$ The label of this edge is determined by Lemma~\ref{lem:3trianglesleft}. There are 9 arguments determining $e$, 8 of which are 1, and one of which, by (ii) is a constant. Therefore the value of $e$ is also constant and also dependent on $s+1.$ This completes the proof.
\end{proof} 
 
Lemma~\ref{lem:interior}(i) gives rise to the heuristic that the ``center" of the grid $T^{n,n-i,i+1}$ is uniformly 1. Several other uniformity patterns are studied in the next section resulting in the statement of the  Uniform Center Theorem.

 \section{The Uniform Center Theorem}\label{sec:constcenter}
This section generalizes results naturally motivated by the two results in Section 5. Prior to doing this we clarify the concept of type and pre-type defined in Definition~\ref{def:pretype} with the following example.
 \begin{example}    By Corollary \ref{cor:tnnminus1istwothirds}, the  pre-type of $T^{n,n-1}_{1,1}$ is $(\frac{2}{3},\frac{2}{3},1).$ Similarly,  the pre-type of
 $T^{n,n-1}_{2,1}$ is 
 $(\frac{2}{3},1, 1),$ (provided $n \ge 3$) and  the pre-type of $T^{n,n-1}_{2,2}$ is $(1, \frac{2}{3},1).$  The ordered triplets associated with $T^{n,n-1}_{2,d}, d \in \{1,2\}$ are distinct; hence, the triangles are not of the same pre-type but are of the same type. \end{example}

 When proving results, we will prove them for the pre-types in the upper left half; by Lemma \ref{lem:upperlefthalf} the results then extend to all triangles in the underlying $n$-grid. This heuristic will be amply illustrated in the remainder of this section.

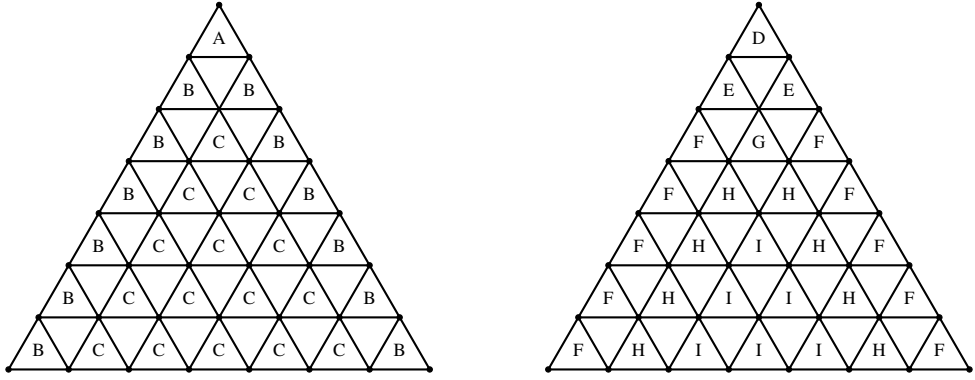
\begin{figure}[ht!]
\begin{center}
\begin{tikzpicture}[scale = .5, line cap=round,line join=round,>=triangle 45,x=1.0cm,y=1.0cm,scale=.8]

\draw [line width=.8pt, color=black] (-3.,10.577350269189628)--  (-4.,8.84529946162075);

\draw [line width=.8pt, color=black] (-4.,8.84529946162075)--  (-5.,7.113248654051867);

\draw [line width=.8pt, color=black] (-5.,7.113248654051867)--  (-6.,5.381197846482991);

\draw [line width=.8pt, color=black] (-6.,5.381197846482991)-- (-7.,3.6491470389141107);

\draw [line width=.8pt, color=black] (-7.,3.6491470389141107)--(-8.,1.9170962313452288)--(-9.,0.1850454237763482)-- (-10.,-1.5470053837925324);

\draw [line width=.8pt, color=black] (-3.,10.577350269189628)-- (-2.,8.845299461620748);

\draw [line width=.8pt, color=black] (-2.,8.845299461620748)--  (-1.,7.113248654051868);

\draw [line width=.8pt, color=black] (-1.,7.113248654051868)-- (0.,5.38119784648299);

\draw [line width=.8pt, color=black] (0.,5.38119784648299)-- (1.,3.6491470389141094);
\draw [line width=.8pt, color=black] (1.,3.6491470389141094)-- (2.,1.9170962313452288)--(3.,0.1850454237763482) --(4.,-1.5470053837925324);
\draw [line width=.8pt, color=black] (4.,-1.5470053837925324)-- (2.,-1.5470053837925324)-- (0.,-1.5470053837925324)-- (-2.,-1.5470053837925324)-- (-4.,-1.5470053837925324)-- (-6.,-1.5470053837925324)-- (-8.,-1.5470053837925324)-- (-10.,-1.5470053837925324);
\draw [line width=.8pt,color=black] (1.,3.6491470389141094)-- (-1.,3.6491470389141134);
\draw [line width=.8pt, color=black] (-3.,3.6491470389141174)-- (-1.,3.6491470389141134);
\draw [line width=.8pt, color=black] (-3.,3.6491470389141174)--  (-5.,3.6491470389141125);
\draw [line width=.8pt, color=black] (-5.,3.6491470389141125)-- (-7.,3.6491470389141107);

\draw [line width=.8pt,color=black] (-4.,8.84529946162075)--(-2.,8.845299461620748)-- (-3.,7.113248654051869)--(-4.,8.84529946162075);
\draw [line width=.8pt,color=black] (-3.,7.113248654051869)-- (-5.,7.113248654051867);
\draw [line width=.8pt, color=black] (-4.,5.381197846482992)-- (-5.,7.113248654051867);
\draw [line width=.8pt,color=black]  (-4.,5.381197846482992)--(-3.,7.113248654051869);
\draw [line width=.8pt,color=black] (-3.,7.113248654051869)--  (-2.,5.381197846482992);
\draw [line width=.8pt, color=black] (-1.,7.113248654051869)--  (-2.,5.381197846482992);
\draw [line width=.8pt,color=black] (-1.,7.113248654051868)--(-3.,7.113248654051869);
\draw [line width=.8pt, color=black] (-6.,5.381197846482991)--(-4.,5.381197846482992);
\draw [line width=.8pt, color=black](-5.,3.6491470389141125)--(-6.,5.381197846482991);
\draw [line width=.8pt,color=black](-5.,3.6491470389141125)--(-4.,5.381197846482992);
\draw [line width=.8pt, color=black] (-4.,5.381197846482992)--  (-2.,5.381197846482992);
\draw [line width=.8pt,color=black] (-3.,3.6491470389141174)--(-4.,5.381197846482992);
\draw [line width=.8pt, color=black] (-3.,3.6491470389141174)--(-2.,5.381197846482992);
\draw [line width=.8pt, color=black] (-2.,5.381197846482992)-- (0.,5.38119784648299)-- (-1.,3.6491470389141134);
\draw [line width=.8pt,color=black] (-1.,3.6491470389141134)--(-2.,5.381197846482992);
\draw [line width=.8pt,color=black] (-1.,3.6491470389141134)--(0.,1.9170962313452288)--(1.,0.1850454237763482);
\draw [line width=.8pt,color=black] (-5.,3.6491470389141134)--(-6.,1.9170962313452288)--(-7.,0.1850454237763482);
\draw [line width=.8pt, color=black] (1.,3.6491470389141134)--(0.,1.9170962313452288)--(2.,1.9170962313452288)--(1.,0.1850454237763482);
\draw [line width=.8pt, color=black] (-7.,3.6491470389141134)--(-6.,1.9170962313452288)--(-8.,1.9170962313452288)--(-7.,0.1850454237763482);
\draw [line width=.8pt, color=black] (-1.,3.6491470389141134)--(-2.,1.9170962313452288)--(0.,1.9170962313452288)--(-1.,0.1850454237763482)--(-2.,1.9170962313452288)--(-3.,0.1850454237763482)--(-4.,1.9170962313452288)--(-5.,0.1850454237763482)--(-6.,1.9170962313452288)--(-4.,1.9170962313452288)--(-5.,3.6491470389141134);
\draw [line width=.8pt,color=black] (-3.,3.6491470389141174)--(-2.,1.9170962313452288)-- (-4.,1.9170962313452288)-- (-3.,3.6491470389141174);
\draw [line width=.8pt, color=black] (1.,0.1850454237763482)-- (-1.,0.1850454237763482)-- (-3.,0.1850454237763482)-- (-5.,0.1850454237763482) -- (-7.,0.1850454237763482);
\draw [line width=.8pt, color=black](1.,0.1850454237763482)--(3.,0.1850454237763482)-- (2.,-1.5470053837925324)--(1.,0.1850454237763482)-- (0.,-1.5470053837925324)-- (-1.,0.1850454237763482)-- (-2.,-1.5470053837925324)-- (-3.,0.1850454237763482)-- (-4.,-1.5470053837925324)-- (-5.,0.1850454237763482) -- (-6.,-1.5470053837925324)-- (-7.,0.1850454237763482)-- (-8.,-1.5470053837925324)--(-9.,0.1850454237763482) -- (-7.,0.1850454237763482);
%--
\begin{scriptsize}

\draw [fill=black] (-3.,10.577350269189628) circle (2.5pt);

\draw [fill=black] (-2.,5.381197846482992) circle (2.5pt);
 
\draw [fill=black] (-1.,7.113248654051868) circle (2.5pt);
 
\draw [fill=black] (-2.,8.845299461620748) circle (2.5pt);
 
\draw [fill=black] (-4.,8.84529946162075) circle (2.5pt);
 
\draw [fill=black] (-5.,7.113248654051867) circle (2.5pt);
 
\draw [fill=black] (-4.,5.381197846482992) circle (2.5pt);
 
\draw [fill=black] (-1.,3.6491470389141134) circle (2.5pt);
 
\draw [fill=black] (0.,5.38119784648299) circle (2.5pt);
 
\draw [fill=black] (1.,3.6491470389141094) circle (2.5pt);
 
\draw [fill=black] (-6.,5.381197846482991) circle (2.5pt);
 
\draw [fill=black] (-5.,3.6491470389141125) circle (2.5pt);
 
\draw [fill=black] (-7.,3.6491470389141107) circle (2.5pt);
 
\draw [fill=black] (-3.,3.6491470389141174) circle (2.5pt);

\draw [fill=black] (2.,1.9170962313452288) circle (2.5pt);

\draw [fill=black] (0.,1.9170962313452288) circle (2.5pt);

\draw [fill=black] (-2.,1.9170962313452288) circle (2.5pt);

\draw [fill=black] (-4.,1.9170962313452288) circle (2.5pt);

\draw [fill=black] (-6.,1.9170962313452288) circle (2.5pt);

\draw [fill=black] (-8.,1.9170962313452288) circle (2.5pt);

\draw [fill=black] (3.,0.1850454237763482) circle (2.5pt);

\draw [fill=black] (1.,0.1850454237763482) circle (2.5pt);

\draw [fill=black] (-1.,0.1850454237763482) circle (2.5pt);

\draw [fill=black] (-3.,0.1850454237763482) circle (2.5pt);

\draw [fill=black] (-5.,0.1850454237763482) circle (2.5pt);

\draw [fill=black] (-7.,0.1850454237763482) circle (2.5pt);

\draw [fill=black] (-9.,0.1850454237763482) circle (2.5pt);

\draw [fill=black] (4.,-1.5470053837925324) circle (2.5pt);

\draw [fill=black] (2.,-1.5470053837925324) circle (2.5pt);

\draw [fill=black] (0.,-1.5470053837925324) circle (2.5pt);

\draw [fill=black] (-2.,-1.5470053837925324) circle (2.5pt);

\draw [fill=black] (-4.,-1.5470053837925324) circle (2.5pt);

\draw [fill=black] (-6.,-1.5470053837925324) circle (2.5pt);

\draw [fill=black] (-8.,-1.5470053837925324) circle (2.5pt);

\draw [fill=black] (-10.,-1.5470053837925324) circle (2.5pt);
\node at (-3,9.5) (nodeA) {A};
\node at (-4,7.77) (nodeB) {B};
\node at (-2,7.77) (nodeB2) {B};
\node at (-1,6.04) (nodeC1) {B};
\node at (-3,6.04) (nodeD1) {C};
\node at (-5,6.04) (nodeC2) {B};
\node at (0,4.30) (nodeE1) {B};
\node at (-2,4.30) (nodeF1) {C};
\node at (-4,4.30) (nodeF2) {C};
\node at (-6,4.30) (nodeE2) {B};
\node at (1,2.57) {B};
\node at (-1,2.57) {C};
\node at (-3,2.57) {C};
\node at (-5,2.57) {C};
\node at (-7,2.57) {B};
\node at (2,0.84) {B};
\node at (0,0.84) {C};
\node at (-2,0.84) {C};
\node at (-4,0.84) {C};
\node at (-6,0.84) {C};
\node at (-8,0.84) {B};
\node at (3,-0.89) {B};
\node at (1,-0.89) {C};
\node at (-1,-0.89) {C};
\node at (-3,-0.89) {C};
\node at (-5,-0.89) {C};
\node at (-7,-0.89) {C};
\node at (-9,-0.89) {B};
\end{scriptsize}

\end{tikzpicture}
\qquad\qquad \begin{tikzpicture}[scale = .5, line cap=round,line join=round,>=triangle 45,x=1.0cm,y=1.0cm,scale=.8]

\draw [line width=.8pt, color=black] (-3.,10.577350269189628)--  (-4.,8.84529946162075);

\draw [line width=.8pt, color=black] (-4.,8.84529946162075)--  (-5.,7.113248654051867);

\draw [line width=.8pt, color=black] (-5.,7.113248654051867)--  (-6.,5.381197846482991);

\draw [line width=.8pt, color=black] (-6.,5.381197846482991)-- (-7.,3.6491470389141107);

\draw [line width=.8pt, color=black] (-7.,3.6491470389141107)--(-8.,1.9170962313452288)--(-9.,0.1850454237763482)-- (-10.,-1.5470053837925324);

\draw [line width=.8pt, color=black] (-3.,10.577350269189628)-- (-2.,8.845299461620748);

\draw [line width=.8pt, color=black] (-2.,8.845299461620748)--  (-1.,7.113248654051868);

\draw [line width=.8pt, color=black] (-1.,7.113248654051868)-- (0.,5.38119784648299);

\draw [line width=.8pt, color=black] (0.,5.38119784648299)-- (1.,3.6491470389141094);
\draw [line width=.8pt, color=black] (1.,3.6491470389141094)-- (2.,1.9170962313452288)--(3.,0.1850454237763482) --(4.,-1.5470053837925324);
\draw [line width=.8pt, color=black] (4.,-1.5470053837925324)-- (2.,-1.5470053837925324)-- (0.,-1.5470053837925324)-- (-2.,-1.5470053837925324)-- (-4.,-1.5470053837925324)-- (-6.,-1.5470053837925324)-- (-8.,-1.5470053837925324)-- (-10.,-1.5470053837925324);
\draw [line width=.8pt,color=black] (1.,3.6491470389141094)-- (-1.,3.6491470389141134);
\draw [line width=.8pt, color=black] (-3.,3.6491470389141174)-- (-1.,3.6491470389141134);
\draw [line width=.8pt, color=black] (-3.,3.6491470389141174)--  (-5.,3.6491470389141125);
\draw [line width=.8pt, color=black] (-5.,3.6491470389141125)-- (-7.,3.6491470389141107);

\draw [line width=.8pt,color=black] (-4.,8.84529946162075)--(-2.,8.845299461620748)-- (-3.,7.113248654051869)--(-4.,8.84529946162075);
\draw [line width=.8pt,color=black] (-3.,7.113248654051869)-- (-5.,7.113248654051867);
\draw [line width=.8pt, color=black] (-4.,5.381197846482992)-- (-5.,7.113248654051867);
\draw [line width=.8pt,color=black]  (-4.,5.381197846482992)--(-3.,7.113248654051869);
\draw [line width=.8pt,color=black] (-3.,7.113248654051869)--  (-2.,5.381197846482992);
\draw [line width=.8pt, color=black] (-1.,7.113248654051869)--  (-2.,5.381197846482992);
\draw [line width=.8pt,color=black] (-1.,7.113248654051868)--(-3.,7.113248654051869);
\draw [line width=.8pt, color=black] (-6.,5.381197846482991)--(-4.,5.381197846482992);
\draw [line width=.8pt, color=black](-5.,3.6491470389141125)--(-6.,5.381197846482991);
\draw [line width=.8pt,color=black](-5.,3.6491470389141125)--(-4.,5.381197846482992);
\draw [line width=.8pt, color=black] (-4.,5.381197846482992)--  (-2.,5.381197846482992);
\draw [line width=.8pt,color=black] (-3.,3.6491470389141174)--(-4.,5.381197846482992);
\draw [line width=.8pt, color=black] (-3.,3.6491470389141174)--(-2.,5.381197846482992);
\draw [line width=.8pt, color=black] (-2.,5.381197846482992)-- (0.,5.38119784648299)-- (-1.,3.6491470389141134);
\draw [line width=.8pt,color=black] (-1.,3.6491470389141134)--(-2.,5.381197846482992);
\draw [line width=.8pt,color=black] (-1.,3.6491470389141134)--(0.,1.9170962313452288)--(1.,0.1850454237763482);
\draw [line width=.8pt,color=black] (-5.,3.6491470389141134)--(-6.,1.9170962313452288)--(-7.,0.1850454237763482);
\draw [line width=.8pt, color=black] (1.,3.6491470389141134)--(0.,1.9170962313452288)--(2.,1.9170962313452288)--(1.,0.1850454237763482);
\draw [line width=.8pt, color=black] (-7.,3.6491470389141134)--(-6.,1.9170962313452288)--(-8.,1.9170962313452288)--(-7.,0.1850454237763482);
\draw [line width=.8pt, color=black] (-1.,3.6491470389141134)--(-2.,1.9170962313452288)--(0.,1.9170962313452288)--(-1.,0.1850454237763482)--(-2.,1.9170962313452288)--(-3.,0.1850454237763482)--(-4.,1.9170962313452288)--(-5.,0.1850454237763482)--(-6.,1.9170962313452288)--(-4.,1.9170962313452288)--(-5.,3.6491470389141134);
\draw [line width=.8pt,color=black] (-3.,3.6491470389141174)--(-2.,1.9170962313452288)-- (-4.,1.9170962313452288)-- (-3.,3.6491470389141174);
\draw [line width=.8pt, color=black] (1.,0.1850454237763482)-- (-1.,0.1850454237763482)-- (-3.,0.1850454237763482)-- (-5.,0.1850454237763482) -- (-7.,0.1850454237763482);
\draw [line width=.8pt, color=black](1.,0.1850454237763482)--(3.,0.1850454237763482)-- (2.,-1.5470053837925324)--(1.,0.1850454237763482)-- (0.,-1.5470053837925324)-- (-1.,0.1850454237763482)-- (-2.,-1.5470053837925324)-- (-3.,0.1850454237763482)-- (-4.,-1.5470053837925324)-- (-5.,0.1850454237763482) -- (-6.,-1.5470053837925324)-- (-7.,0.1850454237763482)-- (-8.,-1.5470053837925324)--(-9.,0.1850454237763482) -- (-7.,0.1850454237763482);

%--
\begin{scriptsize}

\draw [fill=black] (-3.,10.577350269189628) circle (2.5pt);

\draw [fill=black] (-2.,5.381197846482992) circle (2.5pt);
 
\draw [fill=black] (-1.,7.113248654051868) circle (2.5pt);
 
\draw [fill=black] (-2.,8.845299461620748) circle (2.5pt);
 
\draw [fill=black] (-4.,8.84529946162075) circle (2.5pt);
 
\draw [fill=black] (-5.,7.113248654051867) circle (2.5pt);
 
\draw [fill=black] (-4.,5.381197846482992) circle (2.5pt);
 
\draw [fill=black] (-1.,3.6491470389141134) circle (2.5pt);
 
\draw [fill=black] (0.,5.38119784648299) circle (2.5pt);
 
\draw [fill=black] (1.,3.6491470389141094) circle (2.5pt);
 
\draw [fill=black] (-6.,5.381197846482991) circle (2.5pt);
 
\draw [fill=black] (-5.,3.6491470389141125) circle (2.5pt);
 
\draw [fill=black] (-7.,3.6491470389141107) circle (2.5pt);
 
\draw [fill=black] (-3.,3.6491470389141174) circle (2.5pt);

\draw [fill=black] (2.,1.9170962313452288) circle (2.5pt);

\draw [fill=black] (0.,1.9170962313452288) circle (2.5pt);

\draw [fill=black] (-2.,1.9170962313452288) circle (2.5pt);

\draw [fill=black] (-4.,1.9170962313452288) circle (2.5pt);

\draw [fill=black] (-6.,1.9170962313452288) circle (2.5pt);

\draw [fill=black] (-8.,1.9170962313452288) circle (2.5pt);

\draw [fill=black] (3.,0.1850454237763482) circle (2.5pt);

\draw [fill=black] (1.,0.1850454237763482) circle (2.5pt);

\draw [fill=black] (-1.,0.1850454237763482) circle (2.5pt);

\draw [fill=black] (-3.,0.1850454237763482) circle (2.5pt);

\draw [fill=black] (-5.,0.1850454237763482) circle (2.5pt);

\draw [fill=black] (-7.,0.1850454237763482) circle (2.5pt);

\draw [fill=black] (-9.,0.1850454237763482) circle (2.5pt);

\draw [fill=black] (4.,-1.5470053837925324) circle (2.5pt);

\draw [fill=black] (2.,-1.5470053837925324) circle (2.5pt);

\draw [fill=black] (0.,-1.5470053837925324) circle (2.5pt);

\draw [fill=black] (-2.,-1.5470053837925324) circle (2.5pt);

\draw [fill=black] (-4.,-1.5470053837925324) circle (2.5pt);

\draw [fill=black] (-6.,-1.5470053837925324) circle (2.5pt);

\draw [fill=black] (-8.,-1.5470053837925324) circle (2.5pt);

\draw [fill=black] (-10.,-1.5470053837925324) circle (2.5pt);
\node at (-3,9.5) (nodeA) {D};
\node at (-4,7.77) (nodeB) {E};
\node at (-2,7.77) (nodeB2) {E};
\node at (-1,6.04) (nodeC1) {F};
\node at (-3,6.04) (nodeD1) {G};
\node at (-5,6.04) (nodeC2) {F};
\node at (0,4.30) (nodeE1) {F};
\node at (-2,4.30) (nodeF1) {H};
\node at (-4,4.30) (nodeF2) {H};
\node at (-6,4.30) (nodeE2) {F};
\node at (1,2.57) {F};
\node at (-1,2.57) {H};
\node at (-3,2.57) {I};
\node at (-5,2.57) {H};
\node at (-7,2.57) {F};
\node at (2,0.84) {F};
\node at (0,0.84) {H};
\node at (-2,0.84) {I};
\node at (-4,0.84) {I};
\node at (-6,0.84) {H};
\node at (-8,0.84) {F};
\node at (3,-0.89) {F};
\node at (1,-0.89) {H};
\node at (-1,-0.89) {I};
\node at (-3,-0.89) {I};
\node at (-5,-0.89) {I};
\node at (-7,-0.89) {H};
\node at (-9,-0.89) {F};
\end{scriptsize}

\end{tikzpicture}

\caption{The triangle types of the first 7 rows of $T^{n}$ after 1 reduction (left) and 2 reductions (right) of $T^n.$
Distinct letters correspond to distinct types.
The figure is true if   $n$ is  large, say $n \ge 23.$ Consequently, the base of the grid is not shown.}
\label{fig:7gridtriangletypes}
\end{center}
\end{figure}

To motivate the Uniform Center Theorem we first observe certain patterns in triangle types as illustrated in Figure~\ref{fig:7gridtriangletypes} which shows the first seven rows of $T^{n,n-1}$ and $T^{n,n-2}$ for any sufficiently large $n$. The left hand panel, shows that in $T^{n,n-1}$  that for rows, $2 \le r \le 7$ on diagonal $d=1$ and for $3 \le r \ge 7,$ on $d=2$ the types are  identical (triangles B and C in the figure). Similarly, the right hand panel presenting $T^{n,n-2}$ shows that for $3 \le r \le  7, d=1; 4 \le r \le 7, d=2; \text{ and } 5 \le r \le 7, d=3$ the types are identical (F, G, and I in the figure).   These patterns generalize as follows:  For $n \ge 4s, 1 \le d \le s,$ the triangles $T^{n,n-s}_{r,d}$  where $s+d \le r \le n-2s$ all have the same pre-type. By symmetry, the triangles $T^{n,n-s}_{r,r+1-s}$ also all have the same pre-type.  These observations and patterns motivate the concept of the uniform center.
 
%\color{red} This seems to be the only place  in the narrative where angle bracket notation is used. So lets get rid of it (We can replace it by the T notation). I think we agreed that the angle bracket notation can be left in the figures but we should make this notation explicit in the first figure it occurs  (Sections 4 and 5) I think we can have this convention in the captions. \color{black}  
\begin{definition}\label{def:constantcenter} Fix  $s \ge 1.$ If $n \ge 4s, 1 \le d \le s,$  the uniform center of $T^{n,n-s}$ refers to the union of the following three sets of triangles:
  \begin{equation}\label{equ:constantcenterrange}
  T^{n,n-s}_ {r, d}, \qquad s+d \le r \le n-2s,
 \end{equation}
 \begin{equation}\label{equ:constantcenterrange2} 
 T^{n,n-s}_ {r, r+1-d}, \qquad s+d \le r \le n-2s,
 \end{equation} \begin{equation}\label{equ:constantcenterrange3}
 T^{n,n-s}_ {n-s-d+1, p}, \quad s+d \leq p \leq n-2s.\end{equation}
 \end{definition}
 
 The theorem statements will be complete and refer to all sides of the underlying $n$-grid. However, in the proofs we will only deal with \eqref{equ:constantcenterrange} which lies in the upper left half. By our remarks earlier about symmetry, the proofs extend to the entire uniform center. 

The Uniform Center Theorem simply captures the heuristic that the types of the triangles on each diagonal $d, 1 \le d \le s$ are equal in a central region of $T^{n,n-s}.$  Unexpectedly, the theorem can be proven without explicit computation using the methods of Section \ref{sec:proofmethods}. Prior  to stating the theorem, we present two easily proven lemmas which will be useful in the inductive proof of the theorem.

\begin{lemma}\label{lem:inductionassumption} For $s=1, n \ge 3, \; s+d \le r \le n-2s,$ and $
 1 \le d \le s+1$ the pre-types of $T^{n,n-s}_{r,d}$ are identical. 

\end{lemma}
\begin{proof} By Lemma 2.12 it suffices to prove the result for the upper left half.  If $d=1,$ then by Corollary 5.1,  the  pre-type of triangle $T^{n,n-1}_{r,1}, 2 \le r \le n-2,$
is identically $\{\frac{2}{3},1,1\}$. Also by Corollary 5.1, if $d\geq 2$ the pre-type of triangle $T^{n,n-1}_{r,2}$
$3 \le r \le n-2,$ is identically $\{1,1,1\}.$  
\end{proof}

\begin{lemma}\label{lem:inductionstep}
For $n \ge 4s, s \ge 1,$ 
$$
\text{The pre-types of } T^{n,n-s}_{r,d}, \qquad s+d \le r \le n-2s, 
\qquad  s \le d \le s+1,  \text{ are identical }. 
$$
\end{lemma}
\begin{proof}
We first recall some facts about the $s$-grid. By Definition 2.13, the top and lower left corners of the subgrid $T^{n,n-s,s}$ are located at $r=2s-1, d=s$ and $r=n-2s+1,d=s$ respectively. By Lemma~\ref{lem:interior} and considerations of symmetry, in the upper left half, (i) the pre-types of the corner triangles are $\{c(s), c(s),1 \},$ (ii) the pre-types of non-corner triangles on the left side of the triangle boundary are $\{c(s),1,1\},$ and (iii) the pre-types of triangles not on the $s$-grid boundary are $\{1,1,1\}$ where $c(s)$ is as in Lemma~\ref{lem:interior}. 

It immediately follows that the pre-types (on the upper left half) of $T^{n,n-s}_{r,d}, d=s, 2s \le r \le n-2s$ are identically $\{c(s),1,1\}.$ It also immediately follows that the pre-types (on the upper left half) of 
$T^{n,n-s}_{r,d}, d=s+1, 2s+1 \le r \le n-2s$ are identically $\{1,1,1\}.$
\end{proof}
 
  \begin{theorem}[Uniform Centers]\label{the:constantcenters} For any  $s \ge 1,$ and for   $n \ge 4s,$ 
  \begin{enumerate}
\item[(a)] In the uniform center each of the following three sets of triangles $\langle r,d \rangle_{n,n-s}, \langle r,(d+1)-d \rangle_{n,n-s}$, and $\langle n-s-d+1, p \rangle$ have a single pre-type.  
\item[(b)] The triangles   satisfying \eqref{equ:constantcenterrange}, \eqref{equ:constantcenterrange2}, or \eqref{equ:constantcenterrange3}, are of the same type. 
\item[(c)] For each defined $s$-subgrid $T^{n,n-s,s}$ ( Definition \ref{def:ssubgrid}) the interior edge labels are identically 1, and the boundary edge resistance labels are all equal.   
 \item[(d)] For triangles in the uniform center satisfying \eqref{equ:constantcenterrange} the right and base resistance labels are identical, for triangles satisfying~\eqref{equ:constantcenterrange2} the left and base resistance labels are identical, and for triangles satisfying~\eqref{equ:constantcenterrange3} the left and right resistance labels are identical. 
 \end{enumerate}
  \end{theorem}

\begin{proof} 
 Clearly part (b) follows from part (a). Parts (c) and (d) are simply a restatement of Lemma~\ref{lem:interior} and is included in the theorem for purposes of completeness. Hence, it suffices to  prove part (a) and by remarks made above it suffices to prove assertions for the upper left half.
The proof uses an induction argument on $s.$
The base case, $s=1$ is provided by Lemma \ref{lem:inductionassumption}. 

For an induction assumption we assume that for some $s \ge 1,$ that
\begin{equation}\label{equ:uc-toprove}
\text{ for given $d,s,$ the pre-types of $T^{n,n-s}_{r,d}$ are identical for $s+d \le r \le n-2s, 1 \le d \le s+1.$}
\end{equation}
We proceed to prove this assertion with $s$ replaced by $s+1.$ There are two cases to consider according to whether $d \le s$ or $d \in \{s+1,s+2\}.$

\textbf{Case $d \in \{s+1,s+2\}$.} This follows from Lemma \ref{lem:inductionstep} with $s$ replaced by $s+1.$

\textbf{Case $1 \le d \le s.$} To prove the pre-types in \eqref{equ:uc-toprove} are identical, we must show that the right, boundary, left-boundary (if applicable), and non-boundary left (if applicable) edges are identically labeled. We do this using the lemmas presented in Section 4. We suffice with showing that all base edges are identically labeled on each diagonal, the proofs for the other edges being similar and hence omitted.

By the Base Edge Lemma and the notation introduced in Section 4, for given $d, 1 \le d \le s$ and $r, s+1+d \le r \le n-2(s+1),$ we have
 \begin{equation}\label{equ:uc-toprovebase}
 T^{n,n-(s+1)}_{r,d,B} =F(T^{n,n-s}_{r+1,d},T^{n,n-s}_{r+1,d+1},T^{n,n-s}_{r+2,d+1}).
 \end{equation}
By \eqref{equ:uc-toprove}, the triangle arguments on the right-hand side of this last equation have edges whose values are independent of $r.$ Hence the value of the left hand side is also independent of $r$ as was to be shown.

To illustrate the subtleties of how the induction assumption justifies the transition from $s$ to $s+1,$ note that the upper bound for $r$ in \eqref{equ:uc-toprove} is $n-2s,$ and that the maximum row value in the arguments on the right hand side of \eqref{equ:uc-toprovebase}  is $r+2.$ But $n-2s+2=n-2(s+1),$ illustrating how the induction assumption justifies the transition from $s$ to $s+1.$

%    The proof of the Uniform Center theorem is complete.
\end{proof}
 
 \begin{corollary}\label{cor:provenlater} Conjecture \ref{con:main}(b) is true. \end{corollary}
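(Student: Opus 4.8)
The plan is to derive the corollary directly from the interior half of Theorem~\ref{the:constantcenters}(b), which asserts precisely that $Interior(T_{n,n-s}^{s})=1$. The only content beyond citing that theorem is reconciling the two hypotheses on $s$: Conjecture~\ref{con:main}(b) ranges over $1\le s\le\lfloor\tfrac{n+1}{4}\rfloor$, whereas Theorem~\ref{the:constantcenters} assumes $n\ge 4s$. First I would record the elementary equivalence $s\le\lfloor\tfrac{n+1}{4}\rfloor \iff 4s\le n+1 \iff n\ge 4s-1$, valid since $s$ is an integer, so that the conjectural range is exactly the set of pairs with $n\ge 4s-1$.

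For every pair $(n,s)$ with $n\ge 4s$ the claim is then immediate: Theorem~\ref{the:constantcenters}(b) gives $Interior(T_{n,n-s}^{s})=1$, which is verbatim Conjecture~\ref{con:main}(b). Comparing the covered index set $\{s:1\le s\le\lfloor n/4\rfloor\}$ against the claimed set $\{s:1\le s\le\lfloor\tfrac{n+1}{4}\rfloor\}$, their difference is nonempty only when $n\equiv 3\pmod 4$, and then it consists of the single value $s=\tfrac{n+1}{4}$, i.e.\ the boundary case $n=4s-1$. Thus the whole proof reduces to this one residue class.

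I expect $n=4s-1$ to be the main obstacle, since there the uniform center on the extreme diagonal $d=s$ is empty: the range \eqref{equ:constantcenterrange} becomes $2s\le r\le n-2s=2s-1$, which is void, so Theorem~\ref{the:constantcenters} does not literally apply. To close it I would revisit \emph{only} statement~(i) of the induction proving part~(b) of the theorem, namely the assertion $Interior(T_{n,n-(s+1)}^{s+1})=1$, and observe that this step invokes the Base, Right, and Left Edge Lemmas with all three parent-triangle arguments drawn from $Interior(T_{n,n-s}^{s})$, hence all equal to $1$ by the inductive hypothesis, followed by the All-One Corollary~\ref{cor:allones}. This argument needs only that the nested interiors remain nonempty and correctly contain the parent triangles feeding the child interior, a nesting condition governed by a row count weaker than $n\ge 4s$; the full strength $n\ge 4s$ is consumed only by the boundary-uniformity statement~(ii) and by part~(a), neither of which the corollary requires.

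Finally I would check that this interior-only induction survives down to its base case for the boundary pairs (for instance $s=1$, $n=3$ is covered outright by Corollary~\ref{cor:tnnminus1istwothirds}), so that $Interior(T_{n,n-s}^{s})=1$ holds throughout the claimed range. Should tracking the relaxed bound prove delicate, the fallback is that each boundary pair $(n,s)=(4s-1,s)$ fixes a single finite reduced configuration whose interior can be evaluated explicitly through the four edge lemmas of Section~\ref{sec:proofmethods}, thereby completing the proof of Conjecture~\ref{con:main}(b).
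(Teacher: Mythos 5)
Your core move is exactly the paper's: the corollary is stated with no separate proof, as an immediate consequence of the interior half of Theorem~\ref{the:constantcenters}(b). Where you go beyond the paper is in noticing that the ranges do not literally match: the conjecture's range $1\le s\le\lfloor\tfrac{n+1}{4}\rfloor$ is equivalent to $n\ge 4s-1$, while the theorem is stated only for $n\ge 4s$, leaving the single residue class $n=4s-1$ (e.g.\ $n=7$, $s=2$) uncovered by a bare citation. The paper silently glosses over this; your observation is correct, and your diagnosis that on this boundary the uniform center of diagonal $d=s$ degenerates (the range \eqref{equ:constantcenterrange} gives $2s\le r\le 2s-1$) while the $s$-subgrid and its interior remain nonempty is also correct, so the case is genuinely nontrivial. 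Your proposed repair --- rerunning only statement~(i) of the induction for part~(b), which needs just the All-One Corollary applied to parent triangles in $Interior(T_{n,n-s}^{s})$ and a nesting condition weaker than $n\ge 4s$ --- is plausible but left as a plan rather than carried out: you do not actually verify that every parent triangle feeding a child interior edge of $T_{n,n-(s+1)}^{s+1}$ lies in the parent interior when $n=4(s+1)-1$, and that verification is the one substantive step separating your argument from a complete proof. Net effect: same route as the paper, executed more scrupulously, with one boundary case flagged and sketched but not fully closed.
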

 
 As indicated earlier, Conjecture \ref{con:main} motivated the other results in this paper.  Corollary \ref{cor:provenlater} shows how the Uniform Centers Theorem which is used to prove this corollary could have been partially motivated by Conjecture \ref{con:main}. The next section provides a further example of how the Uniform Centers Theorem partially motivates the Main Conjecture. 
 
\section{Proof of the Vanishing One Conjecture in a Special Case} \label{sec:edgeval}

While we have proven the first two parts of Conjecture~\ref{con:main}, a proof of the remaining two parts remains elusive.  It is, however, possible to show the proof of parts (c) and (d) for certain edges, specifically those boundary edges in the first subgrid of  reduced triangles.  In this section, we use the notation
\[L_s = T_{s, 1, L}^{n,n-s},
\qquad \text{ for } n \ge 4s,\]
to refer to the left boundary edge of the triangle at row $s$ diagonal 1, of $T^{n, n-s}$ which by the Uniform Center Theorem is the unique edge label of the uniform center of diagonal 1 in the $s$-th reduction of the original $n$ grid. In a similar manner we define
\[B_s= T_{s, 1, B}^{n,n-s}, \qquad \text{ for } n \ge 4s. \]

\begin{lemma}\label{lem:2leftbdy} The sequence $L_s, s=1,2,3, \dotsc$  is monotone decreasing. \end{lemma}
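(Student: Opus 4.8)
The plan is to reduce the claim to a single explicit one-step recursion coming from the Boundary Edge Lemma (Lemma~\ref{lem:2triangles}), and then to read off monotonicity directly from the algebraic shape of that recursion. Fix $n$ large enough (say $n \ge 4(s+1)$) that both $L_s$ and $L_{s+1}$ are defined as uniform-center values. By the Uniform Center Theorem (Theorem~\ref{the:constantcenters}(a)), all diagonal-$1$ triangles $\langle r,1 \rangle_{n,n-s}$ in the uniform center of $T_{n,n-s}$ share a single pre-type; I will write its left, right, and base edge values as $L_s$, $R_s$, and $B_s$. (Part (c) gives $R_s = B_s$, but the argument will not even require this.)

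The key step is to express $L_{s+1}$ through $L_s, R_s, B_s$. Passing from $T_{n,n-s}$ to $T_{n,n-(s+1)}$ is a single reduction, so I pick a representative uniform-center triangle $\langle r,1 \rangle_{n,n-(s+1)}$ of the child grid, whose left edge equals $L_{s+1}$ by the Uniform Center Theorem, and apply the Boundary Edge Lemma. Its two prescribed parent triangles, $\langle r,1 \rangle_{n,n-s}$ and $\langle r+1,1 \rangle_{n,n-s}$, both lie in the uniform center of the parent grid and hence carry the uniform edge values $(L_s,R_s,B_s)$. Substituting these into the Boundary Edge Lemma together with the definition of $\Delta$ yields
\begin{equation}
L_{s+1} = \Delta(B_s,L_s,R_s) + \Delta(L_s,R_s,B_s) = \frac{B_s L_s + L_s R_s}{L_s + R_s + B_s} = \frac{L_s\,(R_s+B_s)}{L_s + R_s + B_s}.
\end{equation}

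Monotonicity is then immediate. Every edge label here is an effective resistance in a network assembled from positive resistors, since the series, $\Delta$, and $Y$ operations of Algorithm~\ref{alg:supertri} all preserve positivity starting from the all-ones base grid; thus $L_s, R_s, B_s > 0$. Taking reciprocals rewrites the recursion as $\tfrac{1}{L_{s+1}} = \tfrac{1}{L_s} + \tfrac{1}{R_s+B_s} > \tfrac{1}{L_s}$, i.e.\ $L_{s+1}$ is the parallel combination of the positive resistances $L_s$ and $R_s+B_s$, which is strictly smaller than $L_s$; equivalently $L_{s+1} - L_s = -L_s^2/(L_s+R_s+B_s) < 0$. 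I do not anticipate a genuine obstacle: the only points needing care are (i) confirming that both parent triangles invoked by the Boundary Edge Lemma lie in the uniform center, so that substituting the uniform values $(L_s,R_s,B_s)$ is legitimate — this is exactly Theorem~\ref{the:constantcenters}(a) once $n \ge 4(s+1)$ — and (ii) recording the positivity of all edge labels, which follows by a one-line induction on the number of reductions. The heart of the proof is simply recognizing the parallel-resistance structure of the recursion for $L_s$.
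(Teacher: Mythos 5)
Your proof is correct and follows essentially the same route as the paper: apply the Boundary Edge Lemma to two consecutive uniform-center triangles on diagonal $1$ to get the one-step recursion for $L_{s+1}$, then conclude monotonicity from positivity of the edge labels. The only (harmless) differences are cosmetic — the paper substitutes $R_s=B_s$ first to write $L_{s+1}=\frac{2L_sB_s}{L_s+2B_s}$ and rearranges to $L_{s+1}L_s=2B_s(L_s-L_{s+1})>0$, whereas you keep $R_s$ and read off $L_{s+1}<L_s$ from the reciprocal (parallel-resistance) form.
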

\begin{proof} Using the Boundary Edge Lemma, and its accompanying Figure \ref{fig:2triangles}, as summarized in Figure \ref{fig:monotone} we have 
\[
    L_{s+1} = F(A,A)=\frac{2L_{s} B_{s}}
                    {L_s+2 B_s}.
\]
\noindent However, clearing denominators and gathering like terms, this last equation is equivalent to the assertion that
\begin{equation}\label{equ:LsBs} L_{s+1} L_{s} = 2 B_{s} (L_{s}-L_{s+1}).
\end{equation}
The proof is completed by noting that i) all edge label resistances are positive and consequently, ii) the left side of the last equation is positive implying that iii) the parenthetical expression on the right side is positive which iv) is equivalent to a statement of monotonicity.
\end{proof}
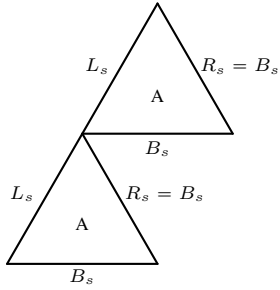
\begin{figure}[ht!]
\begin{center}
\begin{tikzpicture}[scale = 1, line cap=round,line join=round,>=triangle 45,x=1.0cm,y=1.0cm,scale = 1]
\draw [line width=.8pt,color=black] (-2.,10.)-- (-3.,11.732050807568879);
\draw [line width=.8pt,color=black] (-2.,10.)-- (-4.,10);
\draw [line width=.8pt,color=black] (-3.,11.732050807568879)-- (-4.,10.);
\draw [line width=.8pt,color=black] (-4.,10.)-- (-3.,8.267949192431121);
%\draw [line width=.8pt,color=black] (-3.,8.267949192431121)-- (-2.,10.);
\draw [line width=.8pt,color=black] (-4.,10.)-- (-5.,8.267949192431123);
\draw [line width=.8pt,color=black] (-5.,8.267949192431123)-- (-3.,8.267949192431121);
%\draw [line width=.8pt,color=black] (-3.,8.267949192431121)-- (-1.,8.267949192431121);
%\draw [line width=.8pt,color=black] (-1.,8.267949192431121)-- (-2.,10.);

\begin{scriptsize}
\draw[color=black] (-3, 10.5) node {A};
\draw[color=black] (-4,8.8) node {A};
\draw[color=black] (-3.8, 10.9) node {$L_s$};
\draw[color=black] (-4.8,9.2) node {$L_s$};
\draw[color=black] (-1.9, 10.9) node {$R_s=B_s$};
\draw[color=black] (-2.9,9.2) node {$R_s=B_s$};
\draw[color=black] (-3, 9.8) node {$B_s$};
\draw[color=black] (-4,8.1) node {$B_s$};

\end{scriptsize}
\end{tikzpicture}
\end{center}
\caption{Two left boundary triangles referred to in Lemma ~\ref{lem:2leftbdy}.  These two triangles appear in $T^{n,n-s}$ and are in row $r$ and $r+1$ where $s+1 \le r \le n-2s$ (i.e., these two triangles are in the uniform center of diagonal 1 after $s$ reductions).}
\label{fig:monotone}
\end{figure}

The following corollary proves part (c)  of Conjecture \ref{con:main} for these boundary edges for sufficiently large $n.$
\begin{corollary}\label{cor:specialcase} For all $s,$  $L_{s} <1.$ \end{corollary}
\begin{proof} The corollary follows from the lemma and the fact that
$L_{1} = \frac{2}{3}$ (Corollary \ref{cor:tnnminus1istwothirds}).   \end{proof}

We note that \eqref{equ:LsBs} implies that    the  sequence $(L_s)$ in the constant center determines the edge labels of the other edges   in the uniform center for the diagonal $d=1.$

\section{Conclusion}\label{sec:open}

The following section presents one further conjecture as well as directions for future investigations.

\begin{conjecture}\label{con:gcd}
%\color{red} Referee objected to notation $L_s^i.$ We only odefined $L_s$ which refers to the first diagonal. Perhaps easiest is 
%$L_s^1 = L_s$ and $L_s^2$ defineed as the left edge on diagonal 2 (needs a definition) \color{black}
Define $L_s^{(1)} = T_{s, 1, L}^{n,n-s}$ and $L_s^{(2)} = T_{s, 2, L}^{n,n-s}$.  With $num$ standing for the numerator of a maximally reduced fraction, we have
$$gcd(num_{L_s^{(1)}}, num_{L_{s+1}^{(2))}})>1, \qquad s\ge 1.$$  
\end{conjecture}
\begin{remark} The conjecture was tested on several dozen initial values for which it is true. The conjecture has implications for edge values in consecutive reductions of an initial grid. \end{remark}

  This conjecture is a simple example of the plethora of patterns and conjectures that loom in the $n$-grids and their reduction. In fact these reduction patterns of the $n$-grids lie in a 5-dimensional space. The five dimensions are as follows.
\begin{itemize}
    \item $n$ is the number of rows in the initial $n$-grid
    \item $k$ is the number of row reductions that produce an $n-k$-grid under study
    \item The $n-k$ grid is further dimensionalized by
    \begin{itemize}
        \item rows, $r$
        \item diagonals $d$
        \item edges $e$.
    \end{itemize}
\end{itemize}

 The five dimensions allow a richness of perspectives. This paper showed that many patterns abound in these $n$-grids. It is hoped that \cite{Hendel} and this paper will inspire other researchers to delve more deeply into the fascinating patterns that seem to lurk in these $n$-grids. %This furnishes a fertile hunting ground for future results, techniques, and underlying theory. 

\end{document}